\theoremstyle{definition}
\newtheorem{defn}{Definition}
\theoremstyle{plain}
\newtheorem{theorem}[defn]{Theorem}
\newtheorem{proposition}[defn]{Proposition}
\newtheorem{corollary}[defn]{Corollary}
\theoremstyle{remark}
\newtheorem{remark}[defn]{Remark}
\newtheorem*{acknowledgement}{Acknowledgement}
\title{The perturbation of the Seiberg-Witten equations revisited}
\author{Mikio Furuta}
\address{Graduate School of Mathematical Sciences, the University of Tokyo, 3-8-1 Komaba, Meguro, Tokyo 153-8914, Japan}
\email{furuta@ms.u-tokyo.ac.jp}
\thanks{The first author is supported in part by Grant-in-Aid for Scientific Research (B) 24340011.}
\author{Shinichiroh Matsuo}
\address{Department of Mathematics, Osaka University, Toyonaka, Osaka 560-0043, Japan}
\email{matsuo@math.sci.osaka-u.ac.jp}
\thanks{The second author is supported in part by Grant-in-Aid for Young Scientists (B) 25800045.}
\subjclass[2010]{Primary 57R57; Secondary 53C21}
\DeclareMathOperator{\sw}{SW}
\newcommand{\Z}{\mathbb{Z}}
\newcommand{\spinc}{\mathrm{Spin}^c}
\newcommand{\s}{\mathfrak{s}}
\newcommand{\Hom}{\mathrm{Hom}}
\newcommand{\R}{\mathbb{R}}
\newcommand{\sdforms}{\Lambda^+}
\newcommand{\spinctext}{$\text{spin}^c$}
\newcommand{\W}{Weitzenb\"ock }
\newcommand{\vol}{\mathrm{Vol}}
\newcommand{\C}{\mathbb{C}}
\newcommand{\gauge}{\mathcal{G}}
\newcommand{\conf}{\mathcal{C}}
\newcommand{\quot}{\mathcal{B}}
\newcommand{\map}{\mathrm{Map}}
\newcommand{\U}{\mathrm{U}}
\newcommand{\moduli}{\mathcal{M}}
\renewcommand{\wp}{S^+}
\newcommand{\wm}{S^-}
\newcommand{\wpm}{S^{\pm}}
\newcommand{\st}{s}
\newcommand{\ct}{c}
\newcommand{\sdh}{\omega}
\newcommand{\sdheq}{\hat{\omega}}
\begin{document}
\maketitle

\begin{abstract}
	We introduce a new class of perturbations of the Seiberg-Witten equations.
	Our perturbations offer flexibility in the way the Seiberg-Witten invariants are constructed and also shed a new light to LeBrun's curvature inequalities.
\end{abstract}

\smallskip
\noindent \textbf{Keywords:} Seiberg-Witten equations, scalar curvature, self-dual Weyl curvature

\tableofcontents

\section{Introduction} \label{section: introduction}

The Seiberg-Witten invariants, or the monopole invariants, are invariants of a smooth, closed, oriented $4$-manifold $X$.
When $b^+(X)$ is greater than $1$ and a homology orientation for $X$ is fixed, they can be regarded as a map
\[
	\sw \colon \spinc (X) \to \Z,
\]
where $\spinc(X)$ denotes the set of isomorphism classes of \spinctext-structures on $X$.
They are defined, roughly speaking, by counting solutions to the Seiberg-Witten equations on $X$.
In this paper we will introduce a new class of perturbations of the Seiberg-Witten equations.

Perturbations of these equations have played a prominent role in the interplay between the invariant and the equations.
A standard approach of defining the invariant employs a generic self-dual $2$-form to achieve transversality of the equations.
Witten~\cite{MR1306021} deformed the equations by holomorphic $2$-forms to show that the invariants of a Kahler surface are completely described in terms of the complex geometry of the surface.
Taubes~\cites{MR1306023,MR1798809} introduced two classess of perturbations to prove spectacular results on symplectic $4$-manifolds.
Ozsv{\'a}th and Szab{\'o}~\cite{MR1745017} and Mrowka, Ozsv{\'a}th, and Yu~\cite{MR1611061} used connections on the spinor bundle that do not necessarily induce the Levi-Civita connection on the tangent bundle.
Kronheimer and Mrowka~\cite{MR2388043} introduced the blown-up Seiberg-Witten equations.
Bauer~\cite{Bauer} proposed a ``regularised monopole map''.
For the other direction, LeBrun~\cite{MR2039991} considered conformal transformations of the equations to obtain a simple proof of his celebrated curvature inequalities.
The purpose of this paper is to add a new class to the list.

The new perturbations, introduced in Section~\ref{section: psw}, are a natural consequence of the \W formulae for self-dual $2$-forms and the Dirac operator; the key estimate of the article is the inequality (\ref{ineq: key}).
This key inequality also leads us to define an invariant $\lambda_{\theta}$, which characterise the class of almost-K\"ahler metrics (Proposition~\ref{prop: symp kahler}).
The perturbations involve the scalar curvature, the self-dual Weyl curvature, and the invariant $\lambda_{\theta}$.
Our original motivation for modifying the Seiberg-Witten equations in the light of new perturbations comes from LeBrun's curvature inequalities~\cites{MR1359969, MR1872548}, and we slightly improve them (Theorem~\ref{thm: general lebrun}) along with a new proof of his original inequalities, which will be explained in Section \ref{section: LeBrun}.

Before describing our perturbations in detail, we illustrate them by a simple one.
We first set our conventions for it.
Our notation basically follows that of~\cite{MR2388043}.
Let $(X,g)$ be a smooth, closed, oriented, Riemannian $4$-manifold, and $\s$ a \spinctext-structure on $X$.
For simplicity, we assume that $b^+(X) > 1$ in Introduction.
We denote the bundle of self-dual $2$-forms by $\sdforms$, the scalar curvature of $g$ by $R_g$, and the self-dual Weyl curvature of $g$ by $W^+_g$.
We recall that $W^+_g$ at a point $x \in X$ may be viewed as a trace-free endomorphism $W^+_g(x) \colon \sdforms_x \to \sdforms_x$ of the self-dual $2$-forms at $x$, and a Lipschitz continuous function $w_g \colon X \to (-\infty, 0]$ is defined by its lowest eigenvalue.
The \spinctext-structure $\s$ determines a triple $(\wp, \wm, \rho)$, where $\wpm$ are Hermitian $2$-plane bundles and $\rho \colon T^*X \to \Hom (\wp, \wm)$ is the Clifford multiplication.
The determinant line bundle of $\s$ is denoted by $\det(\s)$.
The canonical real-quadratic map is denoted by $\sigma: \wp \to \sdforms$, and satisfies the pointwise equality $|\sigma(\Phi)|^2 = |\Phi|^4/8$.
The self-dual part of the first Chern class $c_1(\s)$ of the determinant line bundle is denoted by $c_1^+(\s)$.
We adhere to the notational convention that, for any real-valued function $f \colon X \to \R$, we define $f_+ \colon X \to [0, \infty)$ and $f_- \colon X \to [0, \infty)$ by $f_+(x) := \max(f(x), 0)$ and $f_-(x) := \max(-f(x), 0)$ respectively.
Our convention differs from that of~\cite{MR1872548}*{p.287}.
For example, $R_g = (R_g)_+ - (R_g)_-$ and $|R_g| = (R_g)_+ + (R_g)_-$.
Note that $f_{\pm}$ might be only Lipschitz continuous even if $f$ is smooth.
Let us also fix a smooth cut-off function $\beta \colon [0,\infty) \to [0,2]$ that satisfies $\beta(t) \le 1/t$ for $t \in [0,\infty)$, $\beta(t) = 1$ for $t \ll 1$, and $\beta(t) = 1/t$ for $t \ge 1$.
Let $\epsilon > 0$.
Now we can write down a simple version of our perturbed Seiberg-Witten equations for a connection $A$ on $\det(\s)$ and a section $\Phi$ of $\wp$:
\begin{equation} \label{psw intro}
	\left\{
	\begin{aligned}
		D_A \Phi &= 0 \\
		iF_A^+ &= - \frac{1}{\sqrt{8}} \beta(|\sigma(\Phi)|) \left[ \left(\frac{2}{3}R_g + 2w_g \right)_- + \epsilon \right] \sigma(\Phi).
	\end{aligned}
	\right.
\end{equation}
This perturbation can be obtained from (\ref{psw}) in Section~\ref{section: psw} by setting $\sin \theta = 1$ and $\sdheq = 0$ and using $\lambda_{\theta} \ge 0$.
We will show in Section~\ref{section: compactness} and~\ref{section: invariants} that the moduli spaces of solutions to these equations are always compact, and that the invariants defined by them coincide with the Seiberg-Witten invariants. 
We thus deduce that, if a \spinctext-structure $\s$ satisfies $\sw(\s) \ne 0$, then we have a solution to these perturbed equations for every Riemannian metric $g$ and any $\epsilon > 0$.
We now emphasise that this fact yields a quick proof of one of LeBrun's curvature inequalities~\cite{MR1872548}*{Theorem 2.4}:
For any \spinctext-structure $\s$ with $\sw(\s) \ne 0$, we have a solution $(A,\Phi)$ to (\ref{psw intro}) for any $\epsilon > 0$.
Then, the second equation implies that
\begin{align*}
	\int_X |iF_A^+|^2 \,d\mu_g &= \int_X \left| - \frac{1}{\sqrt{8}} \beta(|\sigma(\Phi)|) \left[ \left(\frac{2}{3}R_g + 2w_g \right)_- + \epsilon \right] \sigma(\Phi) \right|^2 \,d\mu_g \\
	&\le \frac{1}{8} \int_X \left[ \left(\frac{2}{3}R_g + 2w_g \right)_- + \epsilon \right]^2 \,d\mu_g,
\end{align*}
where we have used $\beta(|\sigma(\Phi)|) |\sigma(\Phi)| \le 1$.
The last inequality gives
\[
	4\pi^2 (c_1^+(\s))^2 \le \int_X |iF_A^+|^2 \,d\mu_g \le \frac{1}{8} \int_X \left[ \left(\frac{2}{3}R_g + 2w_g \right)_- + \epsilon \right]^2 \,d\mu_g
\]
for any $\epsilon > 0$, and hence we conclude that
\[
	(c_1^+(\s))^2 \le \frac{1}{32\pi^2} \int_X \left(\frac{2}{3}R_g + 2w_g \right)_-^2 \,d\mu_g \le \frac{1}{32\pi^2} \int_X \left(\frac{2}{3}R_g + 2w_g \right)^2 \,d\mu_g.
\]
for any \spinctext-structure $\s$ with $\sw(\s) \ne 0$.
We will also reprove in Section~\ref{section: LeBrun} that $g$ is almost K\"ahler if $c_1^+(\s) \ne 0$ and equality holds.

\section{The perturbed Seiberg-Witten equations} \label{section: perturbation}
In this section we introduce our perturbations of the Seiberg-Witten equations in full generality.
Let $(X,g)$ be a smooth, closed, oriented, Riemannian $4$-manifold, and $\s$ a \spinctext-structure on $X$.
We denote the scalar curvature and the self-dual Weyl curvature of $g$ by $R_g$ and $W^+_g$ respectively.
We define a Lipschitz continuous function $w_g \colon X \to (-\infty, 0]$ to be the lowest eigenvalue of the trace-free endomorphism $W^+_g(x) \colon \sdforms_x \to \sdforms_x$ of the self-dual $2$-forms at $x$.
The \spinctext-structure $\s$ determines a triple $(\wp, \wm, \rho)$, where $\wpm$ are Hermitian $2$-plane bundles and $\rho \colon T^*X \to \Hom (\wp, \wm)$ is the Clifford multiplication.
The determinant line bundle of $\s$ is denoted by $\det(\s)$.
The canonical real-quadratic map is denoted by $\sigma: \wp \to \sdforms$, and satisfies the pointwise equality $|\sigma(\Phi)|^2 = |\Phi|^4/8$.
Fix a reference smooth connection $A_0$ on $\det(\s)$.

\subsection{\W formulae}
We begin by proving some inequalities through \W formulae, which will be the key to everything that follows.
Let $\theta \colon X \to (\R / 2\pi\Z)$ be a smooth function.
For brevity, we abbreviate $\sin \theta$ and $\cos \theta$ as $\st$ and $\ct$ respectively.

The \W formula for self-dual $2$-forms reads
\begin{equation*} \label{sd w}
	(d+d^*)^2 \sigma = \nabla^* \nabla \sigma + \frac{1}{3} R_g \sigma - 2W^+(\sigma, \cdot),
\end{equation*}
and it implies that
\begin{equation} \label{eq: s}
	\begin{aligned}
	&\int_X |(d+d^*) (\st \sigma)|^2 \,d\mu_g	= \int_X \big[ |\nabla (\st \sigma)|^2 + \frac{1}{3} R_g |\st \sigma|^2 - 2 W_g^+(\st \sigma, \st \sigma) \big] \,d\mu_g \\
	= &\int_X \big[ \ct^2 |d\theta \otimes \sigma|^2 + \st^2 |\nabla \sigma|^2 + 2\ct \st(d\theta \otimes \sigma,\nabla \sigma) + \frac{1}{3} \st^2 R_g |\sigma|^2 - 2\st^2 W_g^+(\sigma, \sigma) \big] \,d\mu_g
	\end{aligned}
\end{equation}
for any smooth self-dual $2$-form $\sigma$.
We have
\begin{equation} \label{eq: c}
	\int_X |\nabla (\ct \sigma)|^2 \,d\mu_g = \int_X \big[ \st^2 |d\theta \otimes \sigma|^2 + \ct^2 |\nabla \sigma|^2 - 2\ct \st(d\theta \otimes \sigma,\nabla \sigma) \big] \,d\mu_g
\end{equation}
for any smooth self-dual $2$-form $\sigma$.
These two equalities (\ref{eq: s}) and (\ref{eq: c}) combine to give
\begin{equation} \label{ineq: s and c}
	\begin{aligned}
	&\int_X |(d+d^*) (\st \sigma)|^2 \,d\mu_g + \int_X |\nabla (\ct \sigma)|^2 \,d\mu_g \\
	= &\int_X \big[ |d\theta|^2 |\sigma|^2 + \frac{1}{3} \st^2 R_g |\sigma|^2 - 2\st^2 W_g^+(\sigma, \sigma) \big] \,d\mu_g + \int_X |\nabla \sigma|^2 \,d\mu_g \\
	\le &\int_X \big[ |d\theta|^2 |\sigma|^2 + \frac{1}{3} \st^2 R_g |\sigma|^2 - 2\st^2 w_g |\sigma|^2 \big] \,d\mu_g + \int_X |\nabla \sigma|^2 \,d\mu_g
	\end{aligned}
\end{equation}
for any smooth self-dual $2$-form $\sigma$.

On the other hand, the \W formula for the Dirac operator reads
\begin{equation*} \label{dirac w}
	(D_A^* D_A \Phi, \Phi) = \frac{1}{2}\Delta_g |\Phi|^2 + |\nabla_A \Phi|^2 + \frac{1}{4} R_g |\Phi|^2 + 2(-i F_A^+, \sigma(\Phi)),
\end{equation*}
and it implies
\begin{equation} \label{eq: dirac}
	\begin{aligned}
	 &\int_X (D_A \Phi, D_A(|\Phi|^2 \Phi)) \,d\mu_g \\
	= &\int_X \big[ |\Phi|^2 |\nabla_A \Phi|^2 + \frac{1}{2}|\nabla |\Phi|^2|^2 + \frac{1}{4}R_g |\Phi|^4 - 2(iF_A^+, |\Phi|^2\sigma(\Phi)) \big] \,d\mu_g
	\end{aligned}
\end{equation}
for any smooth connection $A$ on $\det (\s)$ and any smooth section $\Phi$ of $\wp$.
Note that the particular self-dual $2$-form $\sigma = \sigma(\Phi)$ for a section $\Phi$ of $\wp$ satisfies the pointwise ``log Kato inequality''
\[
	\frac{|\nabla \sigma(\Phi)|}{|\sigma(\Phi)|} \le 2 \frac{|\nabla_A \Phi|}{|\Phi|},
\]
and our convention is $|\sigma(\Phi)|^2 = |\Phi|^4 / 8$; hence (\ref{eq: dirac}) can be rewritten in the form
\begin{equation}
	\begin{aligned} \label{ineq: dirac w}
	&\int_X |\nabla \sigma|^2 \,d\mu_g \\
	\le &\int_X \big[ -2|\nabla |\sigma||^2 - R_g |\sigma|^2 + (\sqrt{8}iF_A^+, |\sigma|\sigma) + \frac{1}{2} (D_A \Phi, D_A(|\Phi|^2 \Phi)) \big] \,d\mu_g
	\end{aligned}
\end{equation}
for any smooth connection $A$ on $\det (\s)$, any smooth section $\Phi$ of $\wp$, and $\sigma := \sigma(\Phi)$.

Now we piece (\ref{ineq: s and c}) and (\ref{ineq: dirac w}) together to obtain
\begin{equation} \label{ineq: key}
	\begin{aligned}
	\int_X |(d+d^*) (\st \sigma)|^2 \,d\mu_g + \int_X |\nabla (\ct \sigma)|^2 \,d\mu_g + 2\int_X |\nabla |\sigma||^2 \,d\mu_g \\
	\le \int_X - \big[ (1 - \st^2/3)R_g + 2\st^2 w_g - |d\theta|^2 \big] |\sigma|^2 \,d\mu_g + \int_X (\sqrt{8}iF_A^+, |\sigma|\sigma) \,d\mu_g \\ + \frac{1}{2}\int_X (D_A \Phi, D_A(|\Phi|^2 \Phi)) \,d\mu_g
	\end{aligned}
\end{equation}
for any smooth connection $A$ of $\wp$, any smooth section $\Phi$ of $\wp$, and $\sigma := \sigma(\Phi)$.
This inequality will play a pivotal role in the sequel.
The inequality (\ref{ineq: key}) leads us to define a non-negative constant $\lambda_{\theta}$ by
\begin{equation} \label{lambda}
	\lambda_{\theta} := \inf \left\{ \frac{\|(d+d^*) (\st \sigma)\|_2^2 + \|\nabla (\ct \sigma)\|_2^2 + 2\|\nabla |\sigma|\|_2^2}{\|\sigma\|_2^2} \mid 0 \ne \sigma \in L^2_1(\sdforms) \right\},
\end{equation}
and a Lipschitz function $K_{\theta}$ or $K$ on $X$ by
\[
	K_{\theta} := \left( 1 - \frac{1}{3} \sin^2 \theta \right) R_g + 2(\sin^2 \theta) w_g - |d\theta|^2 + \lambda_{\theta}.
\]
The inequality (\ref{ineq: key}) can be reformulated in terms of $K_{\theta}$ as
\begin{equation} \label{ineq: key reformulated}
	\int_X K_{\theta} |\sigma|^2 \,d\mu_g \le \int_X (\sqrt{8}iF_A^+, |\sigma|\sigma) \,d\mu_g + \frac{1}{2}\int_X (D_A \Phi, D_A(|\Phi|^2 \Phi)) \,d\mu_g
\end{equation}
for any smooth connection $A$ on $\det(\s)$, any smooth section $\Phi$ of $\wp$, and $\sigma := \sigma(\Phi)$.
We observe that $\sigma = \sigma(\Phi)$ is in $L^2_1$ for any $L^p_{1,A_0}$-section $\Phi$ of $\wp$ with $p > 4$; therefore, the inequality (\ref{ineq: key reformulated}) holds for any $L^p_{1,A_0}$-connection on $\det (\s)$ and any $L^p_{1,A_0}$-section of $\wp$.

The invariant $\lambda_{\theta}$ characterises the class of almost-K\"ahler metrics.
\begin{proposition} \label{prop: symp kahler}
	Let $(X,g)$ be a smooth, closed, oriented, Riemannian $4$-manifold, and $\theta \colon X \to (\R / 2\pi \Z)$ a smooth function.
	Then, $\lambda_{\theta} = 0$ if and only if $g$ is almost K\"ahler and $\theta$ is a constant function.
	Moreover, if $\cos \theta > 0$, then $g$ is K\"ahler.
\end{proposition}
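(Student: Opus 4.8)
The proof splits into the two implications, the substance residing in the direct direction. Throughout I would read the numerator in (\ref{lambda}) as a non-negative quadratic functional
\[
	N(\sigma) := \|(d+d^*)(\st\sigma)\|_2^2 + \|\nabla(\ct\sigma)\|_2^2 + 2\|\nabla |\sigma|\|_2^2
\]
on $L^2_1(\sdforms)$, so that $\lambda_\theta = \inf\{N(\sigma)/\|\sigma\|_2^2 : 0\ne\sigma\in L^2_1(\sdforms)\}$. The point that makes the existence theory work is that the first equality in (\ref{ineq: s and c}) rewrites this as
\[
	N(\sigma) = \|\nabla\sigma\|_2^2 + 2\|\nabla |\sigma|\|_2^2 + \int_X \big[\, |d\theta|^2|\sigma|^2 + \tfrac{1}{3}\st^2 R_g|\sigma|^2 - 2\st^2 W^+_g(\sigma,\sigma)\,\big]\,d\mu_g,
\]
and since $R_g$, $W^+_g$ and $|d\theta|^2$ are bounded on the compact $X$, the last integral is $\ge -C\|\sigma\|_2^2$; hence I obtain a Gårding inequality $\|\nabla\sigma\|_2^2 \le N(\sigma) + C\|\sigma\|_2^2$.

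For the converse, suppose $g$ is almost K\"ahler with fundamental $2$-form $\sdh$ and $\theta$ is constant. Then $\sdh$ is self-dual of constant norm with $d\sdh=0$, so $\nabla |\sdh|=0$, and, because a self-dual $2$-form $\eta$ satisfies $(d+d^*)\eta=0$ exactly when $d\eta=0$, also $(d+d^*)(\st\sdh)=0$. Feeding $\sdh$ into $N$ therefore leaves $N(\sdh)=\ct^2\|\nabla\sdh\|_2^2$, which vanishes, forcing $\lambda_\theta=0$, precisely when $g$ is K\"ahler (then $\nabla\sdh=0$, for every constant $\theta$) or when $\cos\theta=0$; in the light of the direct direction and the concluding assertion this accounts for all cases in which $\lambda_\theta=0$.

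For the direct direction assume $\lambda_\theta=0$ and pick a minimizing sequence $\sigma_n$ with $\|\sigma_n\|_2=1$ and $N(\sigma_n)\to0$. The Gårding bound keeps $\{\sigma_n\}$ bounded in $L^2_1$, so after passing to a subsequence $\sigma_n\rightharpoonup\sigma$ weakly in $L^2_1$ and strongly in $L^2$, whence $\|\sigma\|_2=1$ and $\sigma\ne0$. Each of the three summands of $N(\sigma_n)$ tends to $0$, and passing to the weak limit in each yields $\nabla(\ct\sigma)=0$, $(d+d^*)(\st\sigma)=0$, and $\nabla |\sigma|=0$. From the last, $|\sigma|$ is a positive constant, so $\sigma$ is nowhere zero and, after normalising, is the fundamental form of a $g$-compatible almost complex structure $J$. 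Taking exterior derivatives gives $d(\ct\sigma)=0$ and $d(\st\sigma)=0$; expanding with $d\ct=-\st\,d\theta$ and $d\st=\ct\,d\theta$ produces the linear system $\ct\,d\sigma=\st\,d\theta\wedge\sigma$ and $\st\,d\sigma=-\ct\,d\theta\wedge\sigma$, whose determinant $\st^2+\ct^2=1$ forces $d\sigma=0$ and $d\theta\wedge\sigma=0$. Thus $\sigma$ is closed, self-dual, of constant norm and nowhere degenerate, so it is a symplectic form compatible with $J$ and $g$ is almost K\"ahler; being harmonic it is smooth, so the identities hold pointwise. Since wedging with the symplectic form $\sigma$ is a pointwise isomorphism from $1$-forms to $3$-forms (hard Lefschetz in dimension four), $d\theta\wedge\sigma=0$ forces $d\theta=0$, i.e. $\theta$ is constant. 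Finally, if $\cos\theta>0$ then $\ct$ is a positive constant and $\nabla(\ct\sigma)=0$ reduces to $\nabla\sigma=0$, so $J$ is parallel and $g$ is K\"ahler.

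The main obstacle is the existence step, because $N$ is not uniformly elliptic: its leading part degenerates on the zero sets of $\st$ and $\ct$, and the term $2\|\nabla |\sigma|\|_2^2$ is nonlinear in $\sigma$. I would control the latter through the pointwise Kato identity $\nabla |\sigma| = \langle \sigma/|\sigma|,\nabla\sigma\rangle$ away from the (ultimately empty) zero set, which bounds $|\sigma_n|$ in $L^2_1$ and, with the strong $L^2$ convergence of $|\sigma_n|$ to $|\sigma|$, identifies the weak limit of $\nabla |\sigma_n|$ with $\nabla |\sigma|$; equivalently one invokes the convexity, hence weak lower semicontinuity, of $\sigma\mapsto\int_X|\nabla |\sigma||^2\,d\mu_g$. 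Smoothness of the limiting $\sigma$ is then recovered a posteriori from $d\sigma=d^*\sigma=0$, and the remaining geometric steps are routine linear algebra.
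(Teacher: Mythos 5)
Your proof is correct and follows the same skeleton as the paper's: a minimizing sequence normalised in $L^2$, a uniform $L^2_1$-bound from the Weitzenb\"ock identities, Rellich compactness to a non-trivial limit $\sigma_\infty$ on which all three terms of the functional vanish, and then the geometric interpretation. The differences are in the places the paper compresses, and they are worth recording. First, the paper simply asserts ``Consequently, $\theta$ is a constant function and $\sigma_\infty$ is a symplectic form compatible with $g$''; your derivation --- $\nabla(\ct\sigma)=0$ and $(d+d^*)(\st\sigma)=0$ give $d(\ct\sigma)=d(\st\sigma)=0$ (using that a self-dual $2$-form is $(d+d^*)$-harmonic exactly when it is closed), the resulting $2\times 2$ system with determinant $\st^2+\ct^2=1$ forces $d\sigma=0$ and $d\theta\wedge\sigma=0$, and nondegeneracy of the nowhere-vanishing self-dual $\sigma$ makes $\wedge\,\sigma\colon \Lambda^1\to\Lambda^3$ an isomorphism, killing $d\theta$ --- is precisely the computation the paper omits; likewise your lower-semicontinuity/Kato treatment of the nonlinear term $2\|\nabla|\sigma|\|_2^2$ legitimately replaces the paper's strong-convergence shorthand. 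Second, and more substantively, your converse is more careful than the paper's ``The converse is clear'': your computation $N(\sdh)=\ct^2\|\nabla\sdh\|_2^2$ for the fundamental form $\sdh$ shows that for a strictly almost-K\"ahler (non-K\"ahler) metric and a constant $\theta$ with $\cos\theta\neq 0$ one in fact has $\lambda_\theta>0$ (by your own direct direction, a minimizer would be parallel), so the ``if'' direction as literally stated holds only when $\cos\theta=0$ or $g$ is K\"ahler --- exactly the refinement forced by the ``moreover'' clause. Your phrasing accounts for this correctly, and in doing so your blind proof detects a genuine, if minor, imprecision in the statement that the paper's one-line converse glosses over.
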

\begin{proof}
	Assume $\lambda_{\theta} = 0$.
	Then, there exists a sequence $\{\sigma_j\}$ of $g$-self-dual $2$-forms such that $\sigma_j \in L^2_1$, $\|\sigma_j\|_2 = 1$, and
	\[
		\| (d+d^*) (\st \sigma_j)\|_2^2 + \|\nabla (\ct \sigma_j)\|_2^2 + 2\|\nabla |\sigma_j|\|_2^2 \le \frac{1}{j} \big( \|\st \sigma_j \|_2^2 + \|\ct \sigma_j \|_2^2 \big),
	\]
	where $\st = \sin \theta$ and $\ct = \cos \theta$.
	In particular, $\sigma_j$ are uniformly $L^2_1$-bounded by (\ref{eq: s}) and (\ref{eq: c}).
	Thus, there exists a $2$-form $\sigma_{\infty}$ such that $\sigma_j$ strongly $L^2$-converges to $\sigma_{\infty}$, and $(d+d^*)(\st \sigma_j)$ and $\nabla(\ct \sigma_j)$ strongly $L^2$-converge to $(d+d^*)(\st \sigma_{\infty})$ and $\nabla(\ct \sigma_{\infty})$ respectively.
	It follows, therefore, that $\| (d+d^*)(\st \sigma_{\infty})\|_2^2 + \|\nabla(\ct \sigma_{\infty})\|_2^2 + 2\|\nabla |\sigma_{\infty}|\|_2^2 = 0$, and elliptic estimates show that $\sigma_{\infty}$ is a non-trivial smooth $g$-self-dual $2$-form.
	Consequently, $\theta$ is a constant function and $\sigma_{\infty}$ is a symplectic form compatible with $g$.
	Moreover, if $\cos \theta > 0$, then $\sigma_{\infty}$ is $g$-parallel and $g$ is K\"ahler.
	The converse is clear.
\end{proof}

\subsection{Perturbations} \label{section: psw}
We next explain in full generality our perturbations of the Seiberg-Witten equations.
Let $\theta \colon X \to (\R / 2\pi \Z)$ be a smooth function, and we abbreviate $\sin \theta$ and $\cos \theta$ as $\st$ and $\ct$ respectively.
The non-negative constant $\lambda_{\theta}$ is defined by (\ref{lambda}).
Recall that $K_{\theta}$ or $K$ stand for $(1 - \st^2/3) R_g + 2\st^2 w_g - |d\theta|^2 + \lambda_{\theta}$, and that $K_{\pm}$ denotes $\max (\pm K, 0)$.
Fix a smooth cut-off function $\beta \colon [0,\infty) \to [0,2]$ that satisfies $\beta(t) \le 1/t$ for $t \in [0,\infty)$, $\beta(t) = 1$ for $t \ll 1$, and $\beta(t) = 1/t$ for $t \ge 1$.
Let $\sdheq$ be a (not necessarily continuous) $g$-self-dual $2$-form with $\|\sdheq\|_{\infty} \le 1$, and $\epsilon > 0$.
We now consider the following perturbed Seiberg-Witten equations for a connection $A$ on $\det(\s)$ and a section $\Phi$ of $\wp$
\begin{equation} \label{psw}
	\left\{
		\begin{aligned}
		D_A \Phi &= 0 \\
		\sqrt{8}iF_A^+ &= - \beta(|\sigma(\Phi)|) (K_- + \epsilon) \sigma(\Phi) + K_+ \sdheq.
		\end{aligned}
	\right.
\end{equation}
A solution $(A,\Phi)$ of (\ref{psw}) is called reducible if $\Phi = 0$.
The gauge group $\map(X,\U(1))$ acts on the set of solutions.
We remark that, by the choice of $\beta$,
\[
	\beta(|\sigma(\Phi)|) \cdot |\sigma(\Phi)| \le 1
\]
at each point.

We then set up suitable function spaces to define moduli spaces of solutions to (\ref{psw}).
Since we have allowed $\sdheq$ to be of just $L^{\infty}$, it is not expected that solutions are smooth.
We pick a $p > 4$ and a smooth connection $A_0$ on $\det(\s)$, and define
\[
	\conf := \{(A,\Phi) \mid A-A_0 \in L^p_1 \text{ and } \Phi \in L^p_{1,A_0}\}
\]
and
\[
	\gauge := \{u \colon X \to \C \mid u \in L^p_2 \text{ and } |u| = 1\}.
\]
The gauge group $\gauge$ is a Banach Lie group acting smoothly on the configuration space $\conf$.
Let $\quot := \conf / \gauge$, and write $[A,\Phi]$ for the orbit of $(A,\Phi)$ under $\gauge$.
We then define the moduli space $\moduli(\theta,\sdheq,\epsilon)$ to be
\[
	\moduli(\theta,\sdheq,\epsilon) := \{ [A,\Phi] \in \quot \mid \text{ (\ref{psw}) holds.} \}.
\]
Our moduli spaces are always compact, which will be proved in the next section.

\subsection{Compactness} \label{section: compactness}
In this section we prove that moduli spaces for the perturbed Seiberg-Witten equations (\ref{psw}) are compact.
The main idea that underlies our proof is to convert quantitative estimates in LeBrun~\cite{MR1872548} to a qualitative property of compactness of moduli spaces.
Fix a smooth function $\theta \colon X \to (\R / 2\pi \Z)$, a $g$-self-dual $2$-form $\sdheq$ with $\|\sdheq\|_{\infty} \le 1$, and a constant $\epsilon > 0$, and we consider the perturbed Seiberg-Witten equations (\ref{psw}) for these $\theta$, $\sdheq$, and $\epsilon$ throughout this section.

We first observe that the $L^{\infty}$-bound on $F_A^+$ is immediate.
\begin{proposition} \label{prop: curvature bound}
	Any $L^p_1$-solution $(A, \Phi)$ of (\ref{psw}) satisfies the $L^{\infty}$-bound
	\[
		\|F_A^+\|_{\infty} \le \frac{1}{\sqrt{8}} (\|K\|_{\infty} + \epsilon).
	\]
\end{proposition}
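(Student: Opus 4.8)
The plan is to read the bound directly off the second equation of~(\ref{psw}) as a pointwise, and hence almost-everywhere, inequality, and then take the essential supremum. First I would note that, although $(A,\Phi)$ is only of regularity $L^p_1$, the right-hand side of the curvature equation $\sqrt{8}iF_A^+ = - \beta(|\sigma(\Phi)|)(K_-+\epsilon)\sigma(\Phi) + K_+\sdheq$ is genuinely bounded: the Lipschitz function $K$ is bounded, the cut-off satisfies $\beta(|\sigma(\Phi)|)\,|\sigma(\Phi)|\le 1$ everywhere, and $\|\sdheq\|_\infty\le 1$ by hypothesis. Thus $F_A^+\in L^\infty$ and every estimate below is valid almost everywhere.

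The key structural observation is that the two summands on the right-hand side have disjoint support. Since $K$ is real-valued, $K_+(x)K_-(x)=0$ at every point, so at each $x$ at most one of the two terms contributes. Where $K(x)\le 0$ the second term vanishes, and, using $K_-+\epsilon\ge 0$ together with $\beta(|\sigma(\Phi)|)\,|\sigma(\Phi)|\le 1$, I get $\sqrt{8}\,|F_A^+(x)|\le K_-(x)+\epsilon\le \|K\|_\infty+\epsilon$. Where $K(x)\ge 0$ the first term vanishes, and $\|\sdheq\|_\infty\le 1$ gives $\sqrt{8}\,|F_A^+(x)|=K_+(x)\,|\sdheq(x)|\le K_+(x)\le \|K\|_\infty\le \|K\|_\infty+\epsilon$. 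Combining the two cases yields $\sqrt{8}\,|F_A^+|\le \|K\|_\infty+\epsilon$ almost everywhere, and dividing by $\sqrt{8}$ gives the stated bound.

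There is no serious obstacle here: the proposition is an immediate consequence of the shape of the perturbation, in particular the normalisation built into $\beta$ and the pointwise constraint $\|\sdheq\|_\infty\le 1$. One could even dispense with the case analysis and simply apply the triangle inequality, bounding $\sqrt{8}\,|F_A^+|\le (K_-+\epsilon)+K_+=|K|+\epsilon\le\|K\|_\infty+\epsilon$ via $K_++K_-=|K|$; the only point requiring a little care is the low regularity of the data, which is why I would emphasise at the outset that the right-hand side lies in $L^\infty$, so that the pointwise inequality is meaningful and passes to the essential supremum.
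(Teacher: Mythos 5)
Your proposal is correct and, in its closing paragraph, coincides exactly with the paper's proof: the authors simply apply the triangle inequality together with $\beta(|\sigma(\Phi)|)\,|\sigma(\Phi)| \le 1$, $\|\sdheq\|_{\infty} \le 1$, and $K_+ + K_- = |K|$ to obtain $\sqrt{8}\,|F_A^+| \le |K| + \epsilon$ pointwise, with no case analysis. One slip in your case analysis, though: where $K(x) \ge 0$ the first term does \emph{not} vanish, since $K_- + \epsilon = \epsilon > 0$ there, so you pick up an extra contribution of size at most $\epsilon$; the corrected estimate $\sqrt{8}\,|F_A^+(x)| \le \epsilon + K_+(x) \le \|K\|_{\infty} + \epsilon$ still yields the stated bound, but as written that case is wrong --- which is a good reason to prefer the uniform triangle-inequality argument you end with, where the disjointness $K_+ K_- = 0$ is never needed.
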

\begin{proof}
	The second equation of~{(\ref{psw})} implies the following pointwise inequalities
	\begin{equation*}
		\begin{aligned}
		\left| \sqrt{8}iF_A^+ \right| &= \left| - \beta(|\sigma(\Phi)|) (K_- + \epsilon)\sigma(\Phi) + K_+ \sdheq \right| \\
		&\le  (K_-+\epsilon) \cdot |\beta(|\sigma(\Phi)|) \sigma(\Phi)| + K_+ \cdot |\sdheq| \\
		&\le K_- + \epsilon + K_+  = |K| + \epsilon,
		\end{aligned}
	\end{equation*}
	where we have used $\beta(|\sigma(\Phi)|) |\sigma(\Phi)| \le 1$, $\|\sdheq\|_{\infty} \le 1$, and $|K| = K_+ + K_-$.
\end{proof}

We next derive the $L^4$-bound on $\Phi$ via the inequality (\ref{ineq: key reformulated}).
\begin{proposition} \label{prop: Phi L4}
	Any $L^p_1$-solution $(A, \Phi)$ of (\ref{psw}) satisfies the $L^4$-bound
	\[
		\int_X |\Phi|^4 \,d\mu_g \le 8 \left( 1 + \frac{\max K_-}{\epsilon} \right) \vol(X,g).
	\]
\end{proposition}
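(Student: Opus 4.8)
The plan is to apply the reformulated key inequality (\ref{ineq: key reformulated}) with $\sigma := \sigma(\Phi)$ and then squeeze out a bound on $\int_X |\sigma|^2$, which is equivalent to the desired bound since $|\Phi|^4 = 8|\sigma|^2$. First I would observe that the Dirac term drops out: because $(A,\Phi)$ solves (\ref{psw}), we have $D_A\Phi = 0$, so the inner product $(D_A\Phi, D_A(|\Phi|^2\Phi))$ vanishes identically. Thus (\ref{ineq: key reformulated}) collapses to
\[
	\int_X K_\theta |\sigma|^2 \,d\mu_g \le \int_X (\sqrt{8}iF_A^+, |\sigma|\sigma) \,d\mu_g.
\]

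Next I would substitute the second equation of (\ref{psw}), namely $\sqrt{8}iF_A^+ = -\beta(|\sigma(\Phi)|)(K_- + \epsilon)\sigma + K_+\sdheq$, into the right-hand side. Pairing pointwise against $|\sigma|\sigma$, using $(\sigma,\sigma) = |\sigma|^2$ and the Cauchy--Schwarz estimate $(\sdheq,\sigma) \le |\sdheq|\,|\sigma| \le |\sigma|$ (this is precisely where $\|\sdheq\|_\infty \le 1$ and $K_+ \ge 0$ are used), gives the pointwise bound
\[
	(\sqrt{8}iF_A^+, |\sigma|\sigma) \le -\beta(|\sigma|)(K_- + \epsilon)|\sigma|^3 + K_+|\sigma|^2.
\]
Writing $K_\theta = K_+ - K_-$ on the left and cancelling the common integral $\int_X K_+|\sigma|^2$ from both sides, the whole inequality reduces to
\[
	\int_X \beta(|\sigma|)(K_- + \epsilon)|\sigma|^3 \,d\mu_g \le \int_X K_-|\sigma|^2 \,d\mu_g.
\]

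The final step converts this into a volume bound by splitting $X$ along the cut-off. On $\{|\sigma| \ge 1\}$ the definition of $\beta$ gives $\beta(|\sigma|)|\sigma|^3 = |\sigma|^2$, so the left-hand integrand there is exactly $(K_- + \epsilon)|\sigma|^2$; cancelling its $K_-|\sigma|^2$ part against the matching contribution on the right and discarding the non-negative leftover $\int_{\{|\sigma|<1\}}\beta(|\sigma|)(K_-+\epsilon)|\sigma|^3$ leaves
\[
	\epsilon \int_{\{|\sigma|\ge 1\}} |\sigma|^2 \,d\mu_g \le \int_{\{|\sigma|<1\}} K_-|\sigma|^2 \,d\mu_g \le (\max K_-)\,\vol(X,g),
\]
the last step using $|\sigma|^2 < 1$ on $\{|\sigma|<1\}$. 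Hence $\int_{\{|\sigma|\ge 1\}}|\sigma|^2 \le (\max K_-/\epsilon)\,\vol(X,g)$, while trivially $\int_{\{|\sigma|<1\}}|\sigma|^2 \le \vol(X,g)$; adding the two pieces and multiplying by $8$ yields the claim.

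I expect the only genuinely delicate point to be the bookkeeping around the cut-off $\beta$: one must treat $\{|\sigma|\ge 1\}$ and $\{|\sigma|<1\}$ separately, recognise that $\beta(|\sigma|)|\sigma|^3$ equals $|\sigma|^2$ on the former and is bounded by $|\sigma|^2$ on the latter, and verify that the residual term may safely be thrown away. Everything else is sign-tracking together with the elementary identity $\beta(t)\,t \le 1$.
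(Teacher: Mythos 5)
Your proof is correct and takes essentially the same route as the paper's: both drop the Dirac term (since $D_A\Phi=0$), substitute the curvature equation into (\ref{ineq: key reformulated}), cancel the $K_+$ contributions to isolate $\int_X \beta(|\sigma|)(K_-+\epsilon)|\sigma|^3\,d\mu_g \le \int_X K_-|\sigma|^2\,d\mu_g$, and then split along $\{|\sigma|\ge 1\}$ where $\beta(|\sigma|)|\sigma|=1$ to obtain $\epsilon\int_{|\sigma|\ge 1}|\sigma|^2\,d\mu_g \le \int_{|\sigma|\le 1}K_-|\sigma|^2\,d\mu_g$. The only differences are expository: you make explicit the vanishing of the Dirac term and the conversion $|\Phi|^4 = 8|\sigma(\Phi)|^2$, which the paper leaves implicit.
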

\begin{proof}
	We abbreviate $\sigma(\Phi)$ as $\sigma$.
	The equations (\ref{psw}) and the inequality (\ref{ineq: key reformulated}) combine to yield
	\begin{equation} \label{ineq: key lambda}
		\begin{aligned}
		\int_X (K_+ - K_-) |\sigma|^2 \,d\mu_g 
		&\le \int_X \big( -\beta(|\sigma|) (K_-+\epsilon) \sigma + K_+ \sdheq, |\sigma|\sigma \big) \,d\mu_g \\
		&\le \int_X - \beta(|\sigma|)|\sigma| \cdot (K_- + \epsilon) |\sigma|^2 \,d\mu_g + \int_X K_+ |\sigma|^2 \,d\mu_g,
		\end{aligned}
	\end{equation}
	where we have used $\| \sdheq \|_{\infty} \le 1 $ and $K = K_+ - K_-$.
	Rearranging these inequalities, we have
	\[
		\int_X \beta(|\sigma|) |\sigma| \cdot (K_- + \epsilon) |\sigma|^2 \,d\mu_g \le \int_X K_- |\sigma|^2 \,d\mu_g.
	\]
	Since $\beta(|\sigma|) |\sigma| = 1$ at every point where $|\sigma| \ge 1$, we have
	\[
		\epsilon \int_{|\sigma| \ge 1} |\sigma|^2 \,d\mu_g \le \int_{|\sigma| \le 1} K_- |\sigma|^2 \,d\mu_g.
	\]
	Consequently, we obtain
	\begin{equation*}
		\begin{aligned}
		\int_X |\sigma|^2 \,d\mu_g =& \int_{|\sigma| \le 1} |\sigma|^2 \,d\mu_g + \int_{|\sigma| \ge 1} |\sigma|^2 \,d\mu_g \\
		\le& \int_{|\sigma| \le 1} |\sigma|^2 \,d\mu_g +\frac{1}{\epsilon} \int_{|\sigma| \le 1} K_- |\sigma|^2 \,d\mu_g.
		\end{aligned}
	\end{equation*}
	The desired estimate follows.
\end{proof}

It is now straightforward to show that our moduli spaces are compact.
\begin{theorem} \label{thm: compactness}
	Let $(X,g)$ be a smooth, closed, oriented, Riemannian $4$-manifold, and $\s$ a \spinctext-structure on $X$.
	Fix a $p > 4$.
	For any smooth function $\theta \colon X \to (\R / 2\pi \Z)$, any $g$-self-dual $2$-form $\sdheq$ with $\|\sdheq\|_{\infty} \le 1$, and any $\epsilon > 0$, the moduli space $\moduli(\theta, \sdheq, \epsilon)$ for the perturbed Seiberg-Witten equations~{(\ref{psw})} is compact.
\end{theorem}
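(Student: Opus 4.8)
The plan is to prove sequential compactness: given any sequence of solutions, I will produce gauge transformations after which a subsequence converges in $\conf$ to a further solution, whose class then lies in $\moduli(\theta,\sdheq,\epsilon)$. The quantitative input is already in place—the $L^{\infty}$-bound on $F_A^+$ from Proposition~\ref{prop: curvature bound} and the $L^4$-bound on $\Phi$ from Proposition~\ref{prop: Phi L4}—so the remaining work is the low-regularity gauge theory that turns them into convergence. As a preliminary I note that, the bundle being abelian, Chern--Weil theory expresses $\|F_A^+\|_2^2 - \|F_A^-\|_2^2$ as a fixed topological quantity; hence Proposition~\ref{prop: curvature bound} also bounds $\|F_A^-\|_2$, and therefore the full curvature $F_A$, uniformly in $L^2$.

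First I would gauge-fix. For each solution I put $A$ into Coulomb gauge relative to $A_0$; since the structure group is $\U(1)$ this reduces to solving a scalar Poisson equation and exponentiating, which is always possible, so after an $L^p_2$-gauge transformation we may assume $d^*(A-A_0)=0$. Using the non-identity components of $\gauge$ I then normalise the harmonic part $h$ of $A-A_0$ to lie in a fixed fundamental domain of the Jacobian torus, which bounds its harmonic projection. The elliptic estimate for $d^* \oplus d^+$ now gives, for every $q<\infty$,
\[
	\|A-A_0\|_{L^q_1} \le C\big( \|d^*(A-A_0)\|_{L^q} + \|d^+(A-A_0)\|_{L^q} + \|h\|_{L^2} \big),
\]
and because $d^+(A-A_0)=F_A^+-F_{A_0}^+$ is bounded in $L^{\infty}\subset L^q$ by Proposition~\ref{prop: curvature bound}, this yields a uniform $L^q_1$-bound—hence, for $q>4$, a uniform $C^0$-bound—on $A-A_0$. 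The regularity of $A$ is genuinely capped at $L^q_1$ because the summand $K_+\sdheq$ of the curvature equation is merely $L^{\infty}$, but this causes no difficulty.

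Next I would bootstrap the spinor. Writing the Dirac equation as $D_{A_0}\Phi = -\rho(A-A_0)\Phi$, the elliptic estimate for $D_{A_0}$ together with the $C^0$-bound just obtained gives $\|\Phi\|_{L^4_1} \le C(1+\|A-A_0\|_{C^0})\|\Phi\|_{L^4}$, which is uniformly bounded by Proposition~\ref{prop: Phi L4}; crucially this needs only the $L^4$-bound to start, not a pointwise bound. Since $L^4_1 \hookrightarrow L^q$ for all $q<\infty$ in dimension four, iterating the estimate produces uniform $L^q_1$-bounds on $\Phi$ for every $q$, in particular in $L^p_{1,A_0}$.

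Finally I would extract the limit. The uniform $L^q_1$-bounds and the compactness of $L^q_1\hookrightarrow C^0$ and $L^q_1\hookrightarrow L^p$ (for $q>4$) furnish a subsequence with $\Phi_j\to\Phi_\infty$ in $C^0\cap L^p$, with $A_j-A_0$ converging in $C^0$ and weakly in $L^p_1$, and with convergent harmonic parts. I expect the main obstacle to be upgrading this to \emph{strong} $L^p_1$-convergence, and the key observation that makes it work is that all of the $j$-dependence of the right-hand side of the curvature equation is carried by $\Phi_j$ through the continuous expression $\Phi\mapsto\beta(|\sigma(\Phi)|)(K_-+\epsilon)\sigma(\Phi)$, whereas the non-smooth summand $K_+\sdheq$ is a single fixed $L^{\infty}$-form common to every equation and hence never obstructs convergence. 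Consequently $d^+(A_j-A_0)=F_{A_j}^+-F_{A_0}^+$ converges in $L^p$; combined with $d^*(A_j-A_0)=0$ the elliptic estimate forces $A_j-A_0\to A_\infty-A_0$ strongly in $L^p_1$, and feeding this into $D_{A_0}\Phi_j=-\rho(A_j-A_0)\Phi_j$—whose right-hand side then converges in $L^p$—gives $\Phi_j\to\Phi_\infty$ strongly in $L^p_{1,A_0}$. Passing to the limit in~(\ref{psw}) shows $(A_\infty,\Phi_\infty)$ solves the equations, so $[A_\infty,\Phi_\infty]\in\moduli(\theta,\sdheq,\epsilon)$ and the moduli space is compact.
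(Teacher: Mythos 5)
Your proof is correct and follows essentially the same route as the paper's: gauge-fix, use the $L^{\infty}$ curvature bound of Proposition~\ref{prop: curvature bound} together with the elliptic estimate for $d^*\oplus d^+$ to bound $A-A_0$ in $L^q_1$ for all $q$, bootstrap $\Phi$ from the $L^4$-bound of Proposition~\ref{prop: Phi L4} via the Dirac equation, and extract a convergent subsequence --- the paper simply compresses these standard steps into a few lines. (Your preliminary Chern--Weil bound on $F_A^-$ is harmless but unused, since the $d^*\oplus d^+$ estimate needs only the self-dual part of the curvature.)
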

\begin{proof}
	Proposition~\ref{prop: curvature bound} gives the $L^p_1$-bound on $A$ in an appropriate gauge for any $p$.
	Then, Proposition~\ref{prop: Phi L4} and the first equation gives the $L^4_{1, A_0}$-bound on $\Phi$, and it again gives $L^4_{2, A_0}$-bound on $\Phi$.
	In particular, we obtain the $L^q$-bound on $\Phi$ for any $q$.
	The first equation finally provides the $L^p_{1,A_0}$-bound on $\Phi$.
	Compactness of $\moduli(\theta, \sdheq, \epsilon)$ follows.
\end{proof}

Actually, we have proved more.
\begin{theorem} \label{thm: family compactness}
	Let $(X,g)$ be a smooth, closed, oriented, Riemannian $4$-manifold, and $\s$ a \spinctext-structure on $X$.
	Fix a $p > 4$ and a smooth function $\theta \colon X \to (\R / 2\pi\Z)$.
	Let $F \colon X \times [0,\infty) \to \R$ be a $L^{\infty}$-function, and $\eta$ a (not necessarily continuous) $g$-self-dual $2$-form.
	Assume that $|\eta(x)| \le (K_{\theta})_+ (x)$ for any $x \in X$, and that there exist a constant $T > 0$, $\kappa > 0$, and $\delta > 0$ such that
	\begin{equation*}
		\kappa \ge t F(x,t) \ge
		\begin{cases}
			0  &\text{if $(x,t) \in X \times [0,T]$} \\
			(K_{\theta})_- (x) + \delta&\text{if $(x,t) \in X \times [T, \infty)$.}
		\end{cases}
	\end{equation*}
	Consider the following perturbed Seiberg-Witten equations
	\begin{equation} \label{general psw}
		\left\{
			\begin{aligned}
			D_A \Phi &= 0 \\
			\sqrt{8}iF_A^+ &= - F(x,|\sigma(\Phi)|) \sigma(\Phi) + \eta.
			\end{aligned}
		\right.
	\end{equation}
	Then, the moduli spaces of $L^p_1$-solutions to (\ref{general psw}) are compact.
	Every $L^p_1$-solution to (\ref{general psw}) is smooth if both $F$ and $\eta$ are smooth.
\end{theorem}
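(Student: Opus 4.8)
The plan is to follow verbatim the three-step template already established for the special equations (\ref{psw}): a pointwise $L^{\infty}$-bound on $F_A^+$, an $L^4$-bound on $\Phi$ harvested from the key inequality (\ref{ineq: key reformulated}), and a routine elliptic bootstrap. The two structural hypotheses on $F$ --- that $\kappa \ge tF(x,t) \ge 0$ holds everywhere and $tF(x,t) \ge (K_{\theta})_-(x) + \delta$ holds for $t \ge T$ --- are engineered precisely so that Propositions~\ref{prop: curvature bound} and~\ref{prop: Phi L4} carry over in this generality. The curvature bound is immediate: from the second equation of (\ref{general psw}) together with $tF(x,t) \le \kappa$ and $|\eta(x)| \le (K_{\theta})_+(x)$, one gets
\[
	\left| \sqrt{8} i F_A^+ \right| \le F(x,|\sigma(\Phi)|)\,|\sigma(\Phi)| + |\eta| \le \kappa + \|(K_{\theta})_+\|_{\infty},
\]
exactly as in Proposition~\ref{prop: curvature bound}.

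The heart of the matter is the $L^4$-bound on $\Phi$. Writing $\sigma := \sigma(\Phi)$ and using $D_A\Phi = 0$ to kill the Dirac term, the inequality (\ref{ineq: key reformulated}) reduces to $\int_X K_{\theta}|\sigma|^2 \le \int_X (\sqrt{8} i F_A^+, |\sigma|\sigma)$. Substituting the second equation and estimating $(\eta, |\sigma|\sigma) \le |\eta|\,|\sigma|^2 \le (K_{\theta})_+|\sigma|^2$, the two $(K_{\theta})_+$-terms cancel and one is left with $\int_X F(x,|\sigma|)|\sigma|^3 \le \int_X (K_{\theta})_-|\sigma|^2$. I would then split at $|\sigma| = T$: on $\{|\sigma| \ge T\}$ the hypothesis gives $F(x,|\sigma|)|\sigma|^3 \ge [(K_{\theta})_- + \delta]|\sigma|^2$, while on $\{|\sigma| \le T\}$ the integrand is non-negative. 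Cancelling the $(K_{\theta})_-$-weight over the large-field region leaves $\delta\int_{|\sigma|\ge T}|\sigma|^2 \le \int_{|\sigma|\le T}(K_{\theta})_-|\sigma|^2 \le \|(K_{\theta})_-\|_{\infty}T^2\vol(X,g)$; adding the trivial estimate on $\{|\sigma| \le T\}$ yields a uniform $L^2$-bound on $\sigma$, equivalently an $L^4$-bound on $\Phi$ since $|\sigma|^2 = |\Phi|^4/8$.

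Once these two bounds are in place, compactness follows exactly as in Theorem~\ref{thm: compactness}: the $L^{\infty}$-bound on $F_A^+$ produces an $L^p_1$-bound on $A$ in an appropriate gauge, and the $L^4$-bound on $\Phi$ fed through $D_A\Phi = 0$ bootstraps to $L^4_{2,A_0}$, then to all $L^q$, and finally to the full $L^p_{1,A_0}$-bound, whereupon Rellich compactness closes the argument. For the regularity assertion, when $F$ and $\eta$ are smooth the coupled system is elliptic with smooth coefficients, and a solution, continuous by Sobolev embedding since $p > 4$, can be bootstrapped to $C^{\infty}$. The one point demanding care --- and which I expect to be the main obstacle --- is the nonlinear dependence of the perturbation $F(x,|\sigma(\Phi)|)\sigma(\Phi)$ on $\Phi$ throughout the bootstrap; this is dispatched by the standard observation that, once $\Phi$ is continuous, the composition $\Phi \mapsto F(x,|\sigma(\Phi)|)\sigma(\Phi)$ preserves Sobolev regularity, so no difficulty beyond the usual Seiberg-Witten regularity theory intervenes.
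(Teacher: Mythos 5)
Your proposal is correct and is precisely the argument the paper intends: the paper offers no separate proof of Theorem~\ref{thm: family compactness}, asserting only that it was ``actually proved'' in the course of Propositions~\ref{prop: curvature bound}, \ref{prop: Phi L4} and Theorem~\ref{thm: compactness}, and your write-up carries those three steps over verbatim --- the pointwise bound $F(x,|\sigma|)|\sigma| \le \kappa$ replacing $\beta(|\sigma|)|\sigma| \le 1$, the splitting at $|\sigma| = T$ replacing the splitting at $|\sigma| = 1$, and the same gauge-fixed elliptic bootstrap. Your worry about the nonlinear term in the regularity bootstrap also evaporates for the reason you suspect: since $|\sigma(\Phi)| = |\Phi|^2/\sqrt{8}$, the argument of $F$ is quadratic (hence smooth) in $\Phi$, so the composition is harmless once $\Phi$ is continuous.
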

The equations (\ref{psw}) can be recovered from (\ref{general psw}) by setting $F(x,t) := \beta(t) (K_-(x) + \epsilon)$ and $\eta := K_+ \sdheq$.

\begin{remark}
	Sung~\cite{MR1923274}*{Theorem 3.4} has discovered that there exist an almost-K\"ahler metric $g_h$ on $T^2 \times \Sigma$, where $T^2$ is a torus and $\Sigma$ is a closed Riemannian surface of genus greater than $1$, and a constant $\delta' > 1$ such that
	\[
		\int_X ((1-\delta'/3)R_{g_h} + 2\delta' w_{g_h})^2 \,d\mu_{g_h} < 32\pi^2 (c_1^+(T^2 \times \Sigma))^2.
	\]
	This example illustrates that, for any $\epsilon > 0$, there exists an $\epsilon' \in (0,\epsilon]$ such that the moduli space of solutions to the following equations on $(T^2 \times \Sigma, g_h)$
	\begin{equation*}
		\left\{
			\begin{aligned}
			D_A \Phi &= 0 \\
			\sqrt{8}iF_A^+ &= - \beta(|\sigma(\Phi)|) \big[ (1-\delta'/3)R_{g_h} + 2\delta' w_{g_h})_- + \epsilon' \big] \sigma(\Phi)
			\end{aligned}
		\right.
	\end{equation*}
	is not compact.
	See also \cite{MR1822361}*{Theorem 4.3} and \cite{MR2681684}.
\end{remark}

\subsection{Invariants} \label{section: invariants}
In this section we show that the invariant defined by the perturbed Seiberg-Witten equations~{(\ref{psw})} coincides with the Seiberg-Witten invariant.

Ruan's virtual neighbourhood technique~\cite{MR1635698} (cf.~\cite{MR2025298}*{Proposition 3.3}) works for the perturbed Seiberg-Witten equations (\ref{psw}) because we have shown in Theorem~\ref{thm: compactness} that their moduli spaces $\moduli(\theta,\sdheq,\epsilon)$ are always compact, and we can thus extract integer-valued invariants from $\moduli(\theta,\sdheq,\epsilon)$.
We remark that, if $b_1(X) > 0$, we consider a Banach bundle over the Picard torus and need a $C^1$-partition of unity in $L^p_1$, which always exists (see~\cite{MR1904712}, for example).

We next show that the invariant defined by the perturbed Seiberg-Witten equations~{(\ref{psw})} coincides with the Seiberg-Witten invariant.
If $b^+(X) = 1$, then the Seiberg-Witten invariant depends on a chamber structure in the space of $g$-self-dual $2$-forms $\Omega^+_g(X)$; in this case, we assume that the perturbed Seiberg-Witten equations (\ref{psw}) does not admit any reducible solutions and we only consider Seiberg-Witten invariants for a chamber that contains $K_+ \sdheq$.
\begin{theorem}
	Let $(X,g)$ be a smooth, closed, oriented, Riemannian $4$-manifold, and $\s$ a \spinctext-structure on $X$.
	Fix a $p > 4$, a smooth function $\theta \colon X \to (\R / 2\pi \Z)$, a $g$-self-dual $2$-form $\sdheq$ with $\|\sdheq\|_{\infty} \le 1$, and an $\epsilon > 0$.
	If $b^+(X) = 1$, we assume that the perturbed Seiberg-Witten equations (\ref{psw}) does not admit any reducible solutions.
	Then, the Seiberg-Witten invariant coincides with the invariant defined by (\ref{psw}).
\end{theorem}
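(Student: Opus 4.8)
The plan is to realise the invariant defined by (\ref{psw}) as the endpoint of a chain of admissible homotopies whose net effect is to deform (\ref{psw}) into the standard Seiberg--Witten equations, and then to invoke the invariance of Ruan's invariant under \emph{compact} cobordisms. The key observation is that Theorem~\ref{thm: family compactness} is exactly a machine for producing such cobordisms: whenever a one-parameter family $(F_s,\eta_s)$, $s \in [0,1]$, of data satisfies its hypotheses with $T,\kappa,\delta$ uniform in $s$, the parametrised moduli space $\bigcup_s \{s\}\times\moduli_s$ is compact, and Ruan's virtual neighbourhood construction \cite{MR1635698} applied to the whole $[0,1]$-family shows that the invariants at $s=0$ and $s=1$ agree. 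The entire argument therefore reduces to writing down suitable admissible families, verifying the uniform bounds, and, when $b^+(X)=1$, bookkeeping the reducibles.

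Concretely I would deform in three stages. \emph{First}, put $\eta_s := (1-s)K_+\sdheq$ while keeping $F(x,t)=\beta(t)(K_-(x)+\epsilon)$ fixed; since $|\eta_s| \le |K_+\sdheq| \le (K_{\theta})_+$ for every $s$ and the bounds on $F$ are $s$-independent, Theorem~\ref{thm: family compactness} applies and identifies the invariant of (\ref{psw}) with that of the equation obtained by setting $\sdheq = 0$. \emph{Second}, I would push the cut-off outwards, replacing $\beta$ by a family $\beta_s$ whose threshold moves from $1$ to a large value $R$: here $tF_s(x,t)=\beta_s(t)t\,(K_-(x)+\epsilon)$ stays bounded above by $R(\|K_-\|_{\infty}+\epsilon)=:\kappa$, and, for $t$ beyond the threshold, bounded below by $(K_{\theta})_-(x)+\epsilon$, so the family is again admissible with uniform constants ($T=R$, $\delta=\epsilon$). \emph{Third}, for the resulting equation $\sqrt{8}iF_A^+ = -\beta_R(|\sigma(\Phi)|)(K_-+\epsilon)\sigma(\Phi)$ I would feed the \W formula for $D_A$ into a maximum-principle argument: since $2(-iF_A^+,\sigma(\Phi)) \ge 0$ on solutions, at an interior maximum of $|\Phi|^2$ one gets a pointwise bound $|\sigma(\Phi)| \le \|(R_g)_-\|_{\infty}/\epsilon$. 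Choosing $R$ beyond this bound makes $\beta_R \equiv 1$ on the moduli space, so the equation is genuinely $\sqrt{8}iF_A^+ = -(K_-+\epsilon)\sigma(\Phi)$, a Seiberg--Witten equation in which the quadratic term is merely rescaled by the positive function $K_-+\epsilon$.

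It then remains to identify this last equation with the unperturbed theory. As $K_-+\epsilon$ is everywhere positive, the same maximum-principle bound persists along the homotopy $h_s:=(1-s)(K_-+\epsilon)+s\,c_0$ to a positive constant $c_0$, yielding uniformly compact moduli; the rescaling $\Phi \mapsto c_0^{-1/2}\Phi$ then puts the equation in the standard form $\sqrt{8}iF_A^+ = -\sigma(\Phi)$. The sign is the favourable one—precisely what lets the usual $C^0$ bound close—so the count is $\sw(\s)$ and not the invariant of the conjugate structure. For $b^+(X)>1$ this finishes the proof, since the standard invariant is independent of the metric and of the generic perturbation.

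The main obstacle is the case $b^+(X)=1$, where one must ensure that no reducible appears \emph{anywhere} along the family, not just at its ends. A reducible forces $\Phi=0$ and hence $\sqrt{8}iF_A^+=\eta_s$, so the reducible locus is governed by the harmonic self-dual part of the perturbing class; one therefore has to keep the path $s \mapsto \eta_s$ (and its analogues in the later stages) inside the chamber of $\Omega^+_g(X)$ containing $K_+\sdheq$, which is an open half-space and hence connected, using the standing hypothesis that (\ref{psw}) admits no reducible solution. Tracking this chamber throughout shows that the number produced is the chamber-dependent Seiberg--Witten invariant determined by $K_+\sdheq$, as required. The remaining, more routine, technical point is to arrange the parametrised virtual neighbourhoods compatibly at the matching endpoints of the three stages so that Ruan's invariant is literally constant across each cobordism; this is where the bulk of the careful work lies, but it is forced once the uniform compactness above is in hand.
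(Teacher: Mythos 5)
Your overall architecture is the same as the paper's: identify the invariant of (\ref{psw}) with that of a cut-off, large-threshold equation via admissible families in Theorem~\ref{thm: family compactness} plus Ruan's virtual neighbourhood technique, use the maximum principle to show the cut-off is invisible on the moduli space, and rescale away the coefficient of $\sigma(\Phi)$ to land on the standard Seiberg--Witten equations. Your spinor rescaling $\Phi \mapsto c_0^{-1/2}\Phi$ is a legitimate (and arguably simpler) substitute for the paper's conformal rescaling $\hat{g} = (C/\sqrt{8})^{-2}g$, and your maximum-principle bound $\beta_R(|\sigma(\Phi)|)\,|\sigma(\Phi)| \le \|(R_g)_-\|_{\infty}/\epsilon$ is the correct mechanism for removing the cut-off, matching the paper's use of $\|\sigma(\Phi)\|_{\infty} \le 1$ for (\ref{rescaled SW}). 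One technical remark: your third-stage homotopy $h_s = (1-s)(K_-+\epsilon) + s\,c_0$ has no cut-off, so $tF_s(x,t) = t\,h_s(x)$ is unbounded and Theorem~\ref{thm: family compactness} does \emph{not} apply; your pointwise $C^0$ bound does give compactness by hand, but you should either say so explicitly or keep the cut-off at radius $R$ during this stage (which is admissible with $\kappa = R\max h_s$) so as to stay inside the family-compactness framework, as the paper does by working exclusively with (\ref{rescaled cutoff SW}).

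The genuine gap is in the $b^+(X)=1$ bookkeeping, and it is not merely routine. Your first stage contracts $\eta_s = (1-s)K_+\sdheq$ linearly to $0$, and your terminal equation is the \emph{unperturbed} one. The reducible locus at parameter $s$ is governed by whether the $g$-harmonic self-dual part of $\eta_s$ equals the fixed harmonic representative determined by $c_1^+(\s)$; this wall is an affine condition \emph{not} through the origin in general, so the segment from $K_+\sdheq$ to $0$ can cross it at an interior $s$, and the chamber containing $0$ (the metric chamber) need not be the chamber containing $K_+\sdheq$ that the theorem's convention prescribes. The standing hypothesis that (\ref{psw}) admits no reducibles only says the endpoint $K_+\sdheq$ is off the wall; it does not constrain the path or the origin, so your final paragraph's claim that chamber-tracking goes through contradicts the concrete path you chose. (In the degenerate sub-case where $c_1^+(\s) = 0$ with respect to $g$ --- possible even when $b^+ > 1$ --- the endpoint $\eta = 0$ lies exactly on the wall and the unperturbed equation has reducibles, so the terminal invariant is not even defined by your chain.) The paper avoids all of this by never passing to zero perturbation: it fixes once and for all a \emph{smooth} $\eta$ with $|\eta(x)| \le K_+(x)$ lying in the same chamber as $K_+\sdheq$, keeps it at every stage, and takes as terminal equation the $\eta$-perturbed Seiberg--Witten equations (\ref{sw}), whose chamber is correct by construction. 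Your argument is repaired by the same device: replace your stage-one target $0$ by such an $\eta$ (smoothing $K_+\sdheq$ within its chamber) and carry $\eta$ through stages two and three unchanged.
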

\begin{proof}
Let $C$ be a positive constant larger than $100(\|\eta\|_{\infty} + \|R_g\|_{\infty})$ and $(\|K_-\|_{\infty} + 1/100)$, and $\eta$ a smooth $2$-form that satisfies $|\eta(x)| \le K_+(x)$ for any $x \in X$ and belongs to the same chamber as $K_+ \sdheq$.

We first note that, via rescaling $\hat{g} = (C/\sqrt{8})^{-2} g$, an $L^p_1$-solution to the Seiberg-Witten equations
\begin{equation} \label{sw}
	\left\{
		\begin{aligned}
		D_A \Phi &= 0 \\
		iF_A^+ &= - \sigma(\Phi) + \frac{1}{\sqrt{8}}\eta
		\end{aligned}
	\right.
\end{equation}
with respect to $\hat{g}$ and $\s$ is put into one-to-one correspondence with an $L^p_1$-solution to the perturbed Seiberg-Witten equations
\begin{equation} \label{rescaled SW}
	\left\{
	\begin{aligned}
		D_A \Phi &= 0 \\
		\sqrt{8} iF_A^+ &= - C \sigma(\Phi) + \eta
	\end{aligned}
	\right.
\end{equation}
with respect to $g$ and $\s$.
If (\ref{psw}) does not admit any reducible solutions, then neither (\ref{sw}) nor (\ref{rescaled SW}) admits any reducible ones.
In particular, the Seiberg-Witten invariant, which is defined by (\ref{sw}), coincides with the invariant defined by (\ref{rescaled SW}).

Since $\eta$ is smooth, any $L^p_1$-solution $(A,\Phi)$ to (\ref{rescaled SW}) is smooth; hence, the maximum principle yields $\|\sigma(\Phi)\|_{\infty} \le 1$.
Consequently, each $L^p_1$-solution $(A,\Phi)$ to (\ref{rescaled SW}) satisfies the following perturbed Seiberg-Witten equations
\begin{equation} \label{rescaled cutoff SW}
	\left\{
	\begin{aligned}
		D_A \Phi &= 0 \\
		\sqrt{8} iF_A^+ &= - C \gamma(|\sigma(\Phi)|) \sigma(\Phi) + \eta,
	\end{aligned}
	\right.
\end{equation}
where $\gamma \colon [0,\infty) \to [0,1]$ is a smooth cut-off function with $\gamma(t) = 1$ for $t \le 1$ and $1/2t \le \gamma(t) \le 2/t$ for $1 \le t$.
Any $L^p_1$-solution $(A,\Phi)$ to (\ref{rescaled cutoff SW}) is also smooth; hence, the maximum principle again $\|\sigma(\Phi)\|_{\infty} \le 1$.
Consequently, each $L^p_1$-solution to (\ref{rescaled cutoff SW}) satisfies (\ref{rescaled SW}).
In particular, the invariant defined by (\ref{rescaled SW}) coincides with the one defined by (\ref{rescaled cutoff SW}).

A pair $(F,\eta)$ with $F(x,t) := C\gamma(t)$ satisfies the assumption of Theorem \ref{thm: family compactness}.
Take a path of pairs $(F_t, \eta_t)$ from (\ref{psw}) to (\ref{rescaled cutoff SW}), each of which satisfies the assumptions of Theorem~\ref{thm: family compactness}.
Ruan's virtual neighbourhood technique again works for (\ref{general psw}) with $(F_t, \eta_t)$, and the invariant defined by (\ref{psw}) coincides with the one defined by (\ref{rescaled cutoff SW}).
The theorem follows.
\end{proof}

\section{LeBrun's curvature inequalities} \label{section: LeBrun}
In this section we give yet another proof of LeBrun's curvature inequalities~\cites{MR1822361, MR1919897, MR1359969, MR1872548, MR1487727}.
The non-negative constant $\lambda_{\theta}$ is defined by (\ref{lambda}).
Recall that $K_{\theta}$ or $K$ stand for $(1 - \st^2/3) R_g + 2\st^2 w_g - |d\theta|^2 + \lambda_{\theta}$ and that $K_{\pm}$ denotes $\max (\pm K, 0)$.

\begin{theorem} \label{thm: general square lebrun}
	Let $(X,g)$ be a smooth, closed, oriented, Riemannian $4$-manifold with $b^+(X) > 0$ and $\s$ a \spinctext-structure on $X$; in case $b^+(X) = 1$, we assume that $c_1^+(\s) \ne 0$.
	If $\sw(\s) \ne 0$, then we have
	\[
		 32\pi^2 (c_1^+(\s))^2 \le \int_X \left[ \left( (1 - \sin^2 \theta/3) R_g + 2(\sin^2 \theta) w_g - |d\theta|^2 + \lambda_{\theta} \right)_- \right]^2 \,d\mu_g.
	\]
	for any smooth function $\theta \colon X \to (\R/2\pi\Z)$.
\end{theorem}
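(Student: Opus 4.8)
The plan is to run, for the general function $K = K_\theta$, the same argument sketched in the Introduction for the special case $\tfrac{2}{3}R_g + 2w_g$: extract a solution of the perturbed equations (\ref{psw}) from the non-vanishing of $\sw(\s)$, read off a pointwise curvature bound from the second equation, and then let the perturbation parameter tend to zero.

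First I would dispose of the trivial case. If $c_1^+(\s) = 0$ the left-hand side vanishes and there is nothing to prove, so assume $c_1^+(\s) \ne 0$. Fix the given $\theta$, set $\sdheq = 0$, and fix $\epsilon > 0$. With $\sdheq = 0$ the second equation of (\ref{psw}) forces any reducible solution ($\Phi = 0$) to have $F_A^+ = 0$, whence the self-dual harmonic part of $\tfrac{i}{2\pi}F_A$ vanishes and $c_1^+(\s) = 0$, contrary to assumption. Thus (\ref{psw}) admits no reducibles, and when $b^+(X) = 1$ the class $K_+\sdheq = 0$ consequently sits in a genuine chamber. Hence the identification of the perturbed invariant with the Seiberg--Witten invariant from Section~\ref{section: invariants} applies, so the invariant carried by $\moduli(\theta,0,\epsilon)$ equals $\sw(\s) \ne 0$. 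Since $\moduli(\theta,0,\epsilon)$ is compact by Theorem~\ref{thm: compactness}, a non-zero invariant forces it to be non-empty, and we obtain a solution $(A,\Phi)$ of (\ref{psw}) with $\sdheq = 0$.

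Next I would extract the estimate exactly as in Proposition~\ref{prop: curvature bound}. Putting $\sdheq = 0$ in the second equation and using $\beta(|\sigma(\Phi)|)\,|\sigma(\Phi)| \le 1$ together with $K_- + \epsilon \ge 0$ gives the pointwise bound $\sqrt{8}\,|iF_A^+| \le K_- + \epsilon$; squaring and integrating yields $8\int_X |iF_A^+|^2\,d\mu_g \le \int_X (K_- + \epsilon)^2\,d\mu_g$. I then combine this with the topological lower bound $4\pi^2 (c_1^+(\s))^2 \le \int_X |iF_A^+|^2\,d\mu_g$ already used in the Introduction, which holds because $\tfrac{i}{2\pi}F_A$ is a closed representative of $c_1(\s)$ whose self-dual harmonic part is $c_1^+(\s)$ and has $L^2$-norm no larger than that of its self-dual part $\tfrac{i}{2\pi}F_A^+$, while a self-dual harmonic form $h$ satisfies $\int_X h \wedge h = \|h\|_2^2$. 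This produces $32\pi^2 (c_1^+(\s))^2 \le \int_X (K_- + \epsilon)^2\,d\mu_g$ for every $\epsilon > 0$. Letting $\epsilon \to 0^+$ and applying dominated convergence on the compact manifold $X$ gives the asserted inequality.

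The substantive work has already been carried out in the earlier sections, namely the compactness of $\moduli(\theta,\sdheq,\epsilon)$ and the identification of its invariant with $\sw(\s)$; here the step demanding real care is the logical passage from $\sw(\s) \ne 0$ to the existence of an actual solution, which is precisely where the reducible-and-chamber bookkeeping enters and thereby where the hypothesis $c_1^+(\s) \ne 0$ for $b^+(X) = 1$ is used. Everything downstream of that passage is the pointwise curvature bound and the elementary limit $\epsilon \to 0$, so I do not expect any further obstruction.
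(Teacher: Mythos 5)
Your proof is correct and follows essentially the same route as the paper: extract a solution of (\ref{psw}) with $\hat{\omega} = 0$ from $\sw(\s) \ne 0$, apply the pointwise bound $\sqrt{8}\,|iF_A^+| \le K_- + \epsilon$ from the second equation together with the standard topological bound $4\pi^2 (c_1^+(\s))^2 \le \int_X |iF_A^+|^2 \, d\mu_g$, and let $\epsilon \to 0$. The only difference is that you make explicit the reducible-solution and chamber bookkeeping (and the trivial case $c_1^+(\s) = 0$) that the paper leaves implicit in the phrase ``by assumption, we have a solution,'' which is a sound and slightly more careful rendering of the same argument.
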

\begin{proof}
	By assumption, we have a solution $(A,\Phi)$ to the perturbed Seiberg-Witten equations
	\begin{equation*}
		\left\{
			\begin{aligned}
			D_A \Phi &= 0 \\
			\sqrt{8}iF_A^+ &= - \beta(|\sigma(\Phi)|) (K_- + \epsilon) \sigma(\Phi)
			\end{aligned}
		\right.
	\end{equation*}
	for any $\epsilon > 0$.
	The second equation implies that
	\begin{equation*}
		\begin{aligned}
			\int_X |\sqrt{8} iF_A^+|^2 \,d\mu_g =& \int_X \left| - \beta(|\sigma(\Phi)|) (K_- + \epsilon) \sigma(\Phi) \right|^2 \,d\mu_g \\
			\le& \int_X (K_- + \epsilon)^2 \,d\mu_g.
		\end{aligned}
	\end{equation*}
	We have, thus, 
	\[
		32\pi^2 (c_1^+(\s))^2 \le \int_X |\sqrt{8} iF_A^+|^2 \,d\mu_g \le \int_X (K_- + \epsilon)^2 \,d\mu_g
	\]
	for any $\epsilon > 0$.
	The desired inequality follows.
\end{proof}

\begin{theorem} \label{thm: general lebrun}
	Let $(X,g)$ be a smooth, closed, oriented, $4$-manifold, and $\sdh$ a $g$-self-dual harmonic $2$-form.
	Let $\s$ be a \spinctext-structure on $X$ with $\sw(\s) \ne 0$; in case $b^+(X) = 1$, we consider a chamber that contains $K_+ \sdh / |\sdh|$.
	Then, we have
	\[
		 \int_X \left( (1 - \sin^2 \theta/3) R_g + 2(\sin^2 \theta) w_g - |d\theta|^2 + \lambda_{\theta} \right) \frac{|\sdh|_g}{\sqrt{2}} \,d\mu_g \le 4\pi c_1(\s) \cdot [\sdh]
	\]
	for any smooth function $\theta \colon X \to (\R/2\pi\Z)$.
\end{theorem}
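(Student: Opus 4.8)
The plan is to run the argument of Theorem~\ref{thm: general square lebrun} once more, but to pair the curvature \emph{linearly} against $\sdh$ instead of taking its $L^2$-norm, and --- this is the essential new ingredient --- to switch on the self-dual perturbation by choosing $\sdheq := \sdh/|\sdh|$ rather than $\sdheq = 0$. Since $\sw(\s) \ne 0$ (and, when $b^+(X) = 1$, the class $K_+ \sdh/|\sdh|$ lies in the prescribed chamber), the identification of the perturbed invariant with the Seiberg-Witten invariant together with the compactness of the moduli spaces guarantees that, for every $\epsilon > 0$, the equations (\ref{psw}) for this $\theta$, this $\sdheq$, and this $\epsilon$ admit a solution $(A,\Phi)$. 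Note that $\sdheq$ is a $g$-self-dual form with $\|\sdheq\|_{\infty} \le 1$ (set it to $0$ on the zero set of $\sdh$, where it plays no role), so this is a legitimate instance of (\ref{psw}).

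Write $\sigma := \sigma(\Phi)$. First I would pair the second equation of (\ref{psw}) pointwise with the self-dual form $\sdh$, obtaining
\[
	(\sqrt{8}iF_A^+, \sdh) = -\beta(|\sigma|)(K_- + \epsilon)(\sigma, \sdh) + K_+ (\sdheq, \sdh).
\]
Because $(\sdheq,\sdh) = |\sdh|$ holds everywhere (both sides vanish where $\sdh = 0$), while $(\sigma,\sdh) \le |\sigma|\,|\sdh|$ and $\beta(|\sigma|)|\sigma| \le 1$, the first term is bounded below by $-(K_- + \epsilon)|\sdh|$. This yields the pointwise estimate
\[
	(\sqrt{8}iF_A^+, \sdh) \ge -(K_- + \epsilon)|\sdh| + K_+ |\sdh| = (K - \epsilon)|\sdh|,
\]
which is exactly the place where taking $\sdheq = \sdh/|\sdh|$ pays off: the $K_+$ term upgrades the bound from $-K_-$ to the full $K = K_+ - K_-$, producing the signed integrand that appears on the left of the statement.

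To finish I would integrate and invoke Chern-Weil. Since $\sdh$ is self-dual, $iF_A^+ \wedge \sdh = (iF_A^+,\sdh)\,d\mu_g$ and $\int_X iF_A \wedge \sdh = 2\pi\, c_1(\s)\cdot[\sdh]$, the latter remaining valid for the $L^p_1$-connection $A$. Hence, using $\sqrt{8} = 2\sqrt{2}$,
\[
	4\pi\, c_1(\s)\cdot[\sdh] = \frac{1}{\sqrt{2}}\int_X (\sqrt{8}iF_A^+, \sdh)\,d\mu_g \ge \frac{1}{\sqrt{2}}\int_X (K - \epsilon)|\sdh|\,d\mu_g,
\]
and letting $\epsilon \to 0$ gives the asserted inequality $\int_X K_{\theta}\,\frac{|\sdh|_g}{\sqrt{2}}\,d\mu_g \le 4\pi\, c_1(\s)\cdot[\sdh]$. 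The computation itself is short; the only points requiring care --- and thus the main obstacle --- are bookkeeping the chamber hypothesis so that a solution genuinely exists for each $\epsilon$ when $b^+(X) = 1$, and confirming that the cohomological pairing $\int_X iF_A \wedge \sdh = 2\pi\, c_1(\s)\cdot[\sdh]$ is justified at the $L^p_1$ regularity of $A$ (which it is, as $F_A \in L^p$ with $p > 4$ and $\sdh$ is smooth). The $\epsilon \to 0$ passage is harmless since the left-hand side is independent of $\epsilon$ while the right-hand integrand increases monotonically to $K|\sdh|$.
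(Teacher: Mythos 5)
Your proposal is correct and takes essentially the same route as the paper's own proof: both choose $\sdheq := \sdh/|\sdh|$ (extended by zero on the nodal set of $\sdh$), invoke $\sw(\s) \ne 0$ together with the chamber hypothesis to obtain a solution of (\ref{psw}) for every $\epsilon > 0$, bound the $\sigma(\Phi)$-term by Cauchy--Schwarz and $\beta(|\sigma(\Phi)|)\,|\sigma(\Phi)| \le 1$, evaluate the $K_+ \sdheq$ term against $\sdh$ as $\int_X K_+ |\sdh| \, d\mu_g$, apply Chern--Weil, and let $\epsilon \to 0$. The only cosmetic difference is that you record the estimate pointwise before integrating, whereas the paper manipulates the integrated wedge products directly; your added remarks on the validity of Chern--Weil for $L^p_1$-connections are correct but not a substantive departure.
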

\begin{proof}
	We define $\sdheq(x) := \sdh (x) / |\sdh (x)|$ for $x \in X$ and adopt the convention that $\sdheq = 0$ at a point where $\sdh = 0$.
	Then, $\sdheq$ is a $g$-self-dual $2$-form with $\|\sdheq\|_{\infty} \le 1$.

	By assumption, we have a solution $(A,\Phi)$ to the perturbed Seiberg-Witten equations
	\begin{equation} \label{psw lebrun}
		\left\{
			\begin{aligned}
			D_A \Phi &= 0 \\
			\sqrt{8}iF_A^+ &= - \beta(|\sigma(\Phi)|) (K_- + \epsilon) \sigma(\Phi) + K_+ \sdheq
			\end{aligned}
		\right.
	\end{equation}
	for any $\epsilon > 0$.
	The second equation implies that
	\begin{align*}
		\int_X \sqrt{8} iF_A^+ \wedge \sdh &= \int_X \left[ -  \beta(|\sigma(\Phi)|) (K_- + \epsilon) \sigma(\Phi) + K_+ \sdheq \right] \wedge \sdh\\
		&= -  \int_X \beta(|\sigma(\Phi)|) (K_- + \epsilon) \sigma(\Phi) \wedge \sdh + \int_X K_+ \sdheq \wedge \sdh.
	\end{align*}
	Since $(K_- + \epsilon) \ge 0$ and $\beta(|\sigma(\Phi)|) \ge 0$, the Cauchy-Schwartz inequality yields that
	\begin{equation*}
		\begin{aligned}
		- \int_X \beta(|\sigma(\Phi)|) (K_- + \epsilon) \sigma(\Phi) \wedge \sdh 
		&\ge - \int_X \beta(|\sigma(\Phi)|) (K_- + \epsilon) |\sigma(\Phi)| \cdot |\sdh| \,d\mu_g \\
		&\ge - \int_X (K_- + \epsilon) |\sdh| \,d\mu_g.
		\end{aligned}
	\end{equation*}
	Since $\sdh$ is $g$-self-dual harmonic, we have
	\[
		\int_X K_+ \sdheq \wedge \sdh = \int_X K_+ \frac{(\sdh, \sdh)}{|\sdh|} \,d\mu_g = \int_X K_+ |\sdh| \,d\mu_g
	\]
	and
	\[
		\int_X \sqrt{8}iF_A^+ \wedge \sdh = \int_X \sqrt{8}iF_A \wedge \sdh = 4\sqrt{2}\pi c_1(\s) \cdot [\sdh].
	\]
	Consequently, we have
	\[
		 4\sqrt{2}\pi c_1(\s) \cdot [\sdh] \ge - \int_X (K_- + \epsilon) |\sdh| \,d\mu_g + \int_X K_+ |\sdh| \,d\mu_g = \int_X K |\sdh| \,d\mu_g - \epsilon \int_X |\sdh| \,d\mu_g
	\]
	for any $\epsilon > 0$.
	The desired inequality follows.
\end{proof}

Since we always have $\lambda_{\theta} \ge 0$, we can recover the following curvature inequality of LeBrun by taking $\theta$ as a constant function with $\sin^2 \theta = \delta$.
\begin{corollary}[LeBrun] \label{theorem: LeBrun}
	Let $(X,g)$ be a smooth, closed, oriented, $4$-manifold, and $\sdh$ a non-trivial $g$-self-dual harmonic $2$-form.
	Let $\s$ be a \spinctext-structure on $X$ with $\sw(\s) \ne 0$; in case $b^+(X) = 1$, we consider a chamber that contains $K_+ \sdh / |\sdh|$.
	Then, we have
	\[
		\int_X ((1-\delta/3)R_g + 2\delta w_g) \frac{|\sdh|_g}{\sqrt{2}} \,d\mu_g \le 4\pi c_1(\s) \cdot [\sdh]
	\]
	for any $\delta \in [0,1]$.
\end{corollary}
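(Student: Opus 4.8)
The plan is to deduce the corollary directly from Theorem~\ref{thm: general lebrun} by specialising to constant $\theta$. Fix $\delta \in [0,1]$. Since $\sin^2$ takes every value in $[0,1]$, I can pick a constant function $\theta \colon X \to (\R/2\pi\Z)$ whose value satisfies $\sin^2 \theta = \delta$; the existence of such a $\theta$ is exactly what forces $\delta$ to range over $[0,1]$ rather than all of $\R$. For this $\theta$ one has $d\theta \equiv 0$, so the term $|d\theta|^2$ appearing in $K_{\theta}$ drops out identically, and $K = K_{\theta}$ reduces to $(1-\delta/3)R_g + 2\delta w_g + \lambda_{\theta}$.

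Substituting $\sin^2 \theta = \delta$ and $d\theta = 0$ into the conclusion of Theorem~\ref{thm: general lebrun} immediately gives
\[
	\int_X \left( (1 - \delta/3) R_g + 2\delta w_g + \lambda_{\theta} \right) \frac{|\sdh|_g}{\sqrt{2}} \,d\mu_g \le 4\pi c_1(\s) \cdot [\sdh].
\]
Here I should check that the hypotheses transfer: when $b^+(X)=1$ the chamber in Theorem~\ref{thm: general lebrun} is the one containing $K_+ \sdh/|\sdh|$ with $K = K_{\theta}$ for the chosen constant $\theta$, and this is precisely the chamber named in the corollary, so the two statements refer to the same invariant.

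It then remains only to discard the $\lambda_{\theta}$ term. By the definition (\ref{lambda}) the constant $\lambda_{\theta}$ is non-negative, and $|\sdh|_g/\sqrt{2} \ge 0$ pointwise, so deleting the summand $\lambda_{\theta}\,|\sdh|_g/\sqrt{2}$ only decreases the integrand. Chaining this with the displayed inequality yields
\[
	\int_X \left( (1-\delta/3)R_g + 2\delta w_g \right) \frac{|\sdh|_g}{\sqrt{2}} \,d\mu_g \le 4\pi c_1(\s) \cdot [\sdh],
\]
which is the assertion.

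I expect no genuine obstacle in this argument: all of the analytic content---the \W formulae, the key inequality (\ref{ineq: key}), and the compactness and invariance of the perturbed moduli spaces---has already been packaged into Theorem~\ref{thm: general lebrun}, so the corollary is a one-line specialisation supplemented by the elementary observation $\lambda_{\theta} \ge 0$. The only points meriting a moment's care are the attainability of $\sin^2 \theta = \delta$ by a constant function for each $\delta \in [0,1]$ and the compatibility of the $b^+(X)=1$ chamber conventions, both of which are immediate.
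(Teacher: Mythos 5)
Your proposal is correct and is exactly the paper's argument: the authors derive the corollary from Theorem~\ref{thm: general lebrun} in the single sentence preceding it, by taking $\theta$ to be a constant function with $\sin^2\theta = \delta$ (so $d\theta = 0$) and discarding the term $\lambda_{\theta} \ge 0$. Your additional checks on the attainability of $\sin^2\theta=\delta$ and the matching of the $b^+(X)=1$ chamber conventions are sound and consistent with the paper.
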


We now proceed to examine when equality holds in Corollary \ref{theorem: LeBrun}.
We first give a quick proof that $g$ is almost-K\"ahler if equality holds in Corollary~\ref{theorem: LeBrun} and $[\sdh] \ne 0$.
If equality holds in Corollary~\ref{theorem: LeBrun}, then Theorem~\ref{thm: general lebrun} implies that
	\[
		\int_X ((1 - \delta/3) R_g + 2\delta w_g + \lambda_{\theta}) \frac{|\sdh|_g}{\sqrt{2}} \,d\mu_g \le \int_X ((1-\delta/3)R_g + 2\delta w_g) \frac{|\sdh|_g}{\sqrt{2}} \,d\mu_g,
	\]
where we have used $d\theta = 0$.
Since $\lambda_{\theta} \ge 0$, we have
	\[
		\lambda_{\theta} \int_X |\sdh| \,d\mu_g = 0.
	\]
If $[\sdh] \ne 0$, we obtain $\lambda_{\theta} = 0$.
Consequently, by Proposition~\ref{prop: symp kahler}, it follows that $g$ is almost K\"ahler and that $g$ is K\"ahler if $\cos ^2 \theta = 1 - \delta > 0$.

\begin{theorem}[LeBrun] \label{theorem: LeBrun equality}
	Let $(X,g)$ be a smooth, closed, oriented, $4$-manifold, and $\sdh$ a non-trivial $g$-self-dual harmonic $2$-form.
	Let $\s$ be a \spinctext-structure on $X$ with $\sw(\s) \ne 0$; in case $b^+(X) = 1$, we consider a chamber that contains $K_+ \sdh / |\sdh|$.
	Equality holds in Corollary~\ref{theorem: LeBrun} if and only if $g$ is almost K\"ahler and $\sdh$ is a positive constant multiple of the compatible symplectic form of $g$.
	Moreover, if $\delta < 1$, then $g$ is K\"ahler.
\end{theorem}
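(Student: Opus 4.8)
The plan is to prove the two implications separately. The forward direction refines the discussion preceding the statement, which already shows — via equality in Corollary~\ref{theorem: LeBrun} forcing $\lambda_{\theta}=0$ and Proposition~\ref{prop: symp kahler} — that $g$ is almost K\"ahler, and K\"ahler when $\delta<1$; what remains is to identify $\sdh$ with a positive constant multiple of the compatible symplectic form $\sdh_0$. The converse will follow from the sharpness of the inequalities for an explicit almost-K\"ahler solution.

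For the forward direction, fix for each small $\epsilon>0$ a solution $(A,\Phi)$ of (\ref{psw lebrun}) and set $\sigma:=\sigma(\Phi)$. First I would feed $\sigma$ into the key inequality (\ref{ineq: key}): using $D_A\Phi=0$, $d\theta=0$, $\lambda_{\theta}=0$, the second equation of (\ref{psw lebrun}), and the pointwise bounds $|\sdheq|\le1$ and $\beta(|\sigma|)|\sigma|\le1$, the right-hand side collapses to a quantity bounded by $\max(K_-)\cdot\vol(X,g)$ independently of $\epsilon$. Hence the numerator $\|(d+d^*)(\st\sigma)\|_2^2+\|\nabla(\ct\sigma)\|_2^2+2\|\nabla|\sigma|\|_2^2$ of the Rayleigh quotient defining $\lambda_{\theta}$ stays bounded as $\epsilon\to0$. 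On the other hand, tracking the two estimates in the proof of Theorem~\ref{thm: general lebrun} — the Cauchy--Schwarz step and the step $\beta(|\sigma|)|\sigma|\le1$ — equality in Corollary~\ref{theorem: LeBrun} forces both nonnegative defects to tend to $0$. The Cauchy--Schwarz defect forces $\sigma$ to be asymptotically a pointwise nonnegative multiple of $\sdh$, while the vanishing of the defects, against the $\epsilon$-degeneration of the $L^4$-bound, drives $\|\sigma\|_2\to\infty$; then $\sigma/\|\sigma\|_2$ becomes a minimising sequence for $\lambda_{\theta}$ and, by the argument of Proposition~\ref{prop: symp kahler}, converges to a constant-norm $g$-self-dual harmonic form, i.e. after normalisation to the compatible symplectic form $\sdh_0$.

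Combining these, $\sdh$ is pointwise a nonnegative multiple of $\sdh_0$, say $\sdh=f\sdh_0$ with $f\ge0$. To upgrade this to a global constant I would use that $\sdh$ and $\sdh_0$ are both $g$-self-dual and harmonic and that $\sdh_0$ is symplectic: from $0=d\sdh=df\wedge\sdh_0$ and the pointwise Lefschetz isomorphism $\alpha\mapsto\alpha\wedge\sdh_0$ on $1$-forms, we obtain $df=0$, hence $f\equiv c$; since $\sdh\ne0$ and $f\ge0$, $c>0$. This gives $\sdh=c\,\sdh_0$ and completes the forward direction. For the converse, suppose $g$ is almost K\"ahler and $\sdh=c\,\sdh_0$ with $c>0$. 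Then $\lambda_{\theta}=0$ by Proposition~\ref{prop: symp kahler}, so Corollary~\ref{theorem: LeBrun} and Theorem~\ref{thm: general lebrun} coincide and it suffices to exhibit equality in the latter. For the canonical \spinctext-structure the Taubes--Witten solution has $\sigma(\Phi)$ a positive multiple of $\sdh_0$, hence of $\sdh$; this makes the Cauchy--Schwarz step an equality and, after the constant-norm normalisation, the bound $\beta(|\sigma|)|\sigma|\le1$ sharp in the limit, so equality holds in Corollary~\ref{theorem: LeBrun}. Equivalently, one may verify equality by a direct curvature computation for almost-K\"ahler metrics, as in LeBrun.

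The main obstacle is the limiting analysis in the forward direction. The a priori $L^4$-bound of Proposition~\ref{prop: Phi L4} degenerates as $\epsilon\to0$, so one must argue carefully that $\|\sigma\|_2\to\infty$ and extract the minimising limit; moreover, the Cauchy--Schwarz equality only yields the proportionality $\sdh\parallel\sigma$ on the region $\{K_->0\}$, and promoting it to all of $X$ — where the harmonicity of $\sdh$ and $\sdh_0$ and unique continuation for the elliptic system they satisfy must be invoked — is the delicate point of the proof.
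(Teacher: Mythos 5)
Your overall skeleton (inherit almost-K\"ahlerness and $\lambda_\theta=0$ from the discussion preceding the theorem, then identify $\sdh$ with the symplectic form, with the $df\wedge\sdh_0=0$ Lefschetz argument at the end) is sound, but the central limiting step of your forward direction has a genuine gap: you need $\sigma_j/\|\sigma_j\|_2$ to be a minimising sequence for $\lambda_\theta$, and for that you assert $\|\sigma_j\|_2\to\infty$ as $\epsilon_j\to 0$. Nothing forces this blow-up. Proposition~\ref{prop: Phi L4} is only an \emph{upper} bound that degenerates as $\epsilon\to 0$, and the vanishing of the two defects in the proof of Theorem~\ref{thm: general lebrun} only forces $\beta_j|\sigma_j|\to 1$, i.e.\ $|\sigma_j|$ eventually $\ge 1$ pointwise on $\{K_->0\}\cap\{\sdh\neq 0\}$ — which is perfectly compatible with $\sigma_j$ staying uniformly bounded (this is exactly what happens for the standard solutions on a K\"ahler surface of negative scalar curvature, where equality holds). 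Without blow-up, a bounded Rayleigh numerator gives you nothing. The paper's proof avoids this entirely: it normalises \emph{pointwise}, $\hat\sigma_j:=\sigma_j/|\sigma_j|$, after first proving (i) a uniform \emph{lower} bound on $\|\sigma_j\|_2$ — obtained by showing $\Phi_j$ cannot $L^p$-converge to $0$, since a subsequential weak limit would be a reducible solution of (\ref{psw lebrun}) with $\epsilon=0$, contradicting the chamber hypothesis (a step your proposal omits altogether; without it your limit could be trivial); and (ii) that $(1-\beta_j|\sigma_j|)K_-|\sigma_j|\to 0$ in $L^1$, which is extracted from the equality chain (\ref{ineq: equality}) weighted by $|\sdh|$, and crucially requires B\"ar's theorem that the nodal set of the non-trivial harmonic form $\sdh$ has measure zero to strip off the weight, followed by dominated convergence. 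Feeding (ii) into the key inequality (\ref{ineq: key}) gives $\nabla|\sigma_j|\to 0$ in $L^2$, whence $\hat\sigma_j$ converges in $L^2_1$ to a unit-norm harmonic self-dual form (parallel if $\delta<1$), and the Cauchy--Schwarz equality case of (\ref{ineq: equality}) identifies it with $\sdh/|\sdh|$. Your closing worry about promoting proportionality from $\{K_->0\}$ to $X$ is legitimate, but the constant-norm pointwise normalisation is what resolves it in the paper, not $L^2$ normalisation.

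A secondary flaw is your converse: exhibiting one solution (the Taubes--Witten solution) for which the Cauchy--Schwarz step and the bound $\beta(|\sigma|)|\sigma|\le 1$ are asymptotically sharp does not prove equality in Corollary~\ref{theorem: LeBrun}. Equality there is the identity $\int_X K\,|\sdh|/\sqrt{2}\,d\mu_g = 4\pi c_1(\s)\cdot[\sdh]$, a statement about curvature integrals and characteristic numbers that holds or fails independently of which solution you plug into the intermediate estimates; sharp intermediate inequalities for a particular solution only show that the \emph{proof} of the upper bound cannot be improved along that solution. The correct route is your fallback sentence — LeBrun's direct curvature computation for almost-K\"ahler metrics with the canonical \spinctext-structure — which should be the argument, not an afterthought. (The paper itself proves only the forward implication explicitly, so this part is supplementary, but as written your version is not a proof.)
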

\begin{proof}
	Assume that equality holds in Corollary~\ref{theorem: LeBrun}.
	Let $\theta$ be a constant function on $X$ with $\sin^2 \theta = \delta$.
	As shown above, $\lambda_{\theta} = 0$, and $K = (1-\delta/3)R_g + 2\delta w_g$.
	Let $\epsilon_j = 1/j$.
	By assumption, for each $j$, we have a solution $(A_j, \Phi_j)$ to (\ref{psw lebrun}) with $\epsilon = \epsilon_j$.
	We abbreviate $\sigma(\Phi_j)$ as $\sigma_j$ and $\beta(|\sigma(\Phi_j)|)$ as $\beta_j$.
	
	We first show that $\Phi_j$ does not strongly $L^p$-converge to $0$ as $j \to \infty$.
	If it does, then, after passing to a subsequence (still denoted by $j$) if necessary, $(A_j, \Phi_j)$ converges weakly to a reducible solution to (\ref{psw lebrun}) with $\epsilon = 0$, which contradicts our assumption.
	Consequently, $\|\sigma_j\|_2$ is uniformly bounded from below.
	
	We next show that $(1 - \beta_j |\sigma_j|)K_- |\sigma_j|$ strongly $L^1$-converges strongly to $0$ as $j \to \infty$.
	Equality in Corollary~\ref{theorem: LeBrun} implies, as in the proof of Theorem \ref{thm: general lebrun}, that
	\begin{equation} \label{ineq: equality}
		\begin{aligned}
		\int_X K_- |\sdh| \,d\mu_g &\le \int_X \beta_j (K_- + \epsilon_j) \sigma_j \wedge \sdh \\
		&\le \int_X \beta_j (K_- + \epsilon_j) |\sigma_j||\sdh| \,d\mu_g \le \int_X (K_- + \epsilon_j) |\sdh| \,d\mu_g.
		\end{aligned}
	\end{equation}
	Rearranging these inequalities, we obtain
	\[
		0 \le \int_X (1 - \beta_j |\sigma_j|) K_- |\sdh| \,d\mu_g \le \epsilon_j \int_X |\sdh| \,d\mu_g.
	\]
	Therefore, $(1 - \beta_j|\sigma_j|) K_- |\sdh|$ strongly $L^1$-converges to $0$; after passing to a subsequence (still denoted by $j$) if necessary, $(1 - \beta_j|\sigma_j|) K_- |\sdh|$ converges to $0$ almost everywhere.
	Since the nodal set of the non-trivial harmonic form $\sdh$ is of Lebesgue measure zero~\cite{MR1473317}*{Corollary 1}, it follows that $(1 - \beta_j|\sigma_j|) K_-$ converges to $0$ almost everywhere.
	Although $|\sigma_j|$ might be unbounded as $j \to \infty$, we have $(1 - \beta_j|\sigma_j|) |\sigma_j| = 0$ at each point where $|\sigma_j| \ge 1$ by our choice of $\beta$; hence, $0 \le (1 - \beta_j|\sigma_j|)|\sigma_j| K_- \le K_-$.
	In summary, $(1 - \beta_j|\sigma_j|)K_- |\sigma_j|$ is uniformly bounded and converges to $0$ almost everywhere; consequently, it strongly $L^1$-converges to $0$ by the Lebesgue dominated convergence theorem.
	
	The inequality (\ref{ineq: key}) implies that
	\[
		2\int_X |\nabla |\sigma_j||^2 \,d\mu_g \le \int_X (1-\beta_j|\sigma_j|) K_- |\sigma_j| \,d\mu_g.
	\]
	Thus, $\nabla |\sigma_j|$ strongly $L^2$-converges to $0$.
	Now the inequality (\ref{ineq: key}) again implies, after passing to a subsequence (still denoted by $j$) if necessary, $\hat{\sigma}_j := \sigma_j / |\sigma_j|$ strongly $L^2_1$-converges to a non-trivial smooth $g$-self-dual harmonic $2$-form $\hat{\sigma}_{\infty}$ with pointwise unit length; it is $g$-parallel if $\delta < 1$.
	The inequality (\ref{ineq: equality}) shows that $(\hat{\sigma}_{\infty}, \sdh) = |\hat{\sigma}_{\infty}| |\sdh|$ at any point in $X$;therefore, $\hat{\sigma}_{\infty}$ is a positive constant multiple of $\omega$.
	The theorem follows.
\end{proof}
	
\begin{remark}
	Let us mention how various curvature inequalities are derived from Corollary~\ref{theorem: LeBrun}.
	Setting $\delta = 0$ yields ~\cite{MR1487727}*{Theorem 3}, and setting $\delta = 1$ does~\cite{MR1919897}*{Theorem 3.3}.
	Considering $[\sdh] = - c_1^+(\s)$ and using the Cauchy-Schwartz inequality, we obtain
	\[
		 32\pi^2 (c_1^+(\s))^2 \le \int_X ((1-\delta/3)R_g + 2\delta w_g)^2 \,d\mu_g,
	\]
	which is equivalent to \cite{MR1822361}*{Theorem 4}.
	Then, setting $\delta = 0$ gives~\cite{MR1359969}*{Theorem 2}, and setting $\delta = 1$ does~\cite{MR1872548}*{Theorem 2.4}.
\end{remark}

\begin{acknowledgement}
	The authors wish to express their gratitude to H. Sasahira and M. Ishida for many stimulating conversations on various aspects of this work.
	They also gratefully acknowledge the many helpful suggestions of the anonymous referee.
\end{acknowledgement}

\begin{bibdiv}
\begin{biblist}
\bib{MR1473317}{article}{
   author={B{\"a}r, Christian},
   title={On nodal sets for Dirac and Laplace operators},
   journal={Comm. Math. Phys.},
   volume={188},
   date={1997},
   number={3},
   pages={709--721},
   issn={0010-3616},
   review={\MR{1473317 (98g:58179)}},
   doi={10.1007/s002200050184},
}
\bib{Bauer}{article}{
	author = {Bauer, Stefan},
	title = {Intersection Forms of Spin Four-Manifolds},
	eprint = {arXiv:1211.7092v1},
}
\bib{MR2025298}{article}{
   author={Bauer, Stefan},
   author={Furuta, Mikio},
   title={A stable cohomotopy refinement of Seiberg-Witten invariants. I},
   journal={Invent. Math.},
   volume={155},
   date={2004},
   number={1},
   pages={1--19},
   issn={0020-9910},
   review={\MR{2025298 (2005c:57040)}},
   doi={10.1007/s00222-003-0288-5},
}
\bib{MR1904712}{article}{
   author={Fry, R.},
   author={McManus, S.},
   title={Smooth bump functions and the geometry of Banach spaces: a brief
   survey},
   journal={Expo. Math.},
   volume={20},
   date={2002},
   number={2},
   pages={143--183},
   issn={0723-0869},
   review={\MR{1904712 (2003g:46009)}},
   doi={10.1016/S0723-0869(02)80017-2},
}
\bib{MR2388043}{book}{
   author={Kronheimer, Peter},
   author={Mrowka, Tomasz},
   title={Monopoles and three-manifolds},
   series={New Mathematical Monographs},
   volume={10},
   publisher={Cambridge University Press},
   place={Cambridge},
   date={2007},
   pages={xii+796},
   isbn={978-0-521-88022-0},
   review={\MR{2388043 (2009f:57049)}},
   doi={10.1017/CBO9780511543111},
}
\bib{MR1822361}{article}{
   author={LeBrun, Claude},
   title={Curvature and smooth topology in dimension four},
   language={English, with English and French summaries},
   conference={
      title={Global analysis and harmonic analysis},
      address={Marseille-Luminy},
      date={1999},
   },
   book={
      series={S\'emin. Congr.},
      volume={4},
      publisher={Soc. Math. France},
      place={Paris},
   },
   date={2000},
   pages={179--200},
   review={\MR{1822361 (2002e:53048)}},
}
\bib{MR2681684}{article}{
   author={LeBrun, Claude},
   title={The Einstein-Maxwell equations, extremal K\"ahler metrics, and
   Seiberg-Witten theory},
   conference={
      title={The many facets of geometry},
   },
   book={
      publisher={Oxford Univ. Press, Oxford},
   },
   date={2010},
   pages={17--33},
   review={\MR{2681684 (2011h:53098)}},
   doi={10.1093/acprof:oso/9780199534920.003.0003},
}
\bib{MR2039991}{article}{
   author={LeBrun, Claude},
   title={Einstein metrics, four-manifolds, and differential topology},
   conference={
      title={Surveys in differential geometry, Vol.\ VIII},
      address={Boston, MA},
      date={2002},
   },
   book={
      series={Surv. Differ. Geom., VIII},
      publisher={Int. Press, Somerville, MA},
   },
   date={2003},
   pages={235--255},
   review={\MR{2039991 (2005g:53078)}},
}
\bib{MR1919897}{article}{
   author={LeBrun, Claude},
   title={Hyperbolic manifolds, harmonic forms, and Seiberg-Witten
   invariants},
   booktitle={Proceedings of the Euroconference on Partial Differential
   Equations and their Applications to Geometry and Physics (Castelvecchio
   Pascoli, 2000)},
   journal={Geom. Dedicata},
   volume={91},
   date={2002},
   pages={137--154},
   issn={0046-5755},
   review={\MR{1919897 (2003f:53067)}},
   doi={10.1023/A:1016222709901},
}
\bib{MR1359969}{article}{
   author={LeBrun, Claude},
   title={Polarized $4$-manifolds, extremal K\"ahler metrics, and
   Seiberg-Witten theory},
   journal={Math. Res. Lett.},
   volume={2},
   date={1995},
   number={5},
   pages={653--662},
   issn={1073-2780},
   review={\MR{1359969 (96h:58038)}},
   doi={10.4310/MRL.1995.v2.n5.a10},
}
\bib{MR1872548}{article}{
   author={LeBrun, Claude},
   title={Ricci curvature, minimal volumes, and Seiberg-Witten theory},
   journal={Invent. Math.},
   volume={145},
   date={2001},
   number={2},
   pages={279--316},
   issn={0020-9910},
   review={\MR{1872548 (2002h:53061)}},
   doi={10.1007/s002220100148},
}
\bib{MR1487727}{article}{
   author={LeBrun, Claude},
   title={Yamabe constants and the perturbed Seiberg-Witten equations},
   journal={Comm. Anal. Geom.},
   volume={5},
   date={1997},
   number={3},
   pages={535--553},
   issn={1019-8385},
   review={\MR{1487727 (98j:58032)}},
}
\bib{MR1611061}{article}{
   author={Mrowka, Tomasz},
   author={Ozsv{\'a}th, Peter},
   author={Yu, Baozhen},
   title={Seiberg-Witten monopoles on Seifert fibered spaces},
   journal={Comm. Anal. Geom.},
   volume={5},
   date={1997},
   number={4},
   pages={685--791},
   issn={1019-8385},
   review={\MR{1611061 (98m:58017)}},
}
\bib{MR1745017}{article}{
   author={Ozsv{\'a}th, Peter},
   author={Szab{\'o}, Zolt{\'a}n},
   title={The symplectic Thom conjecture},
   journal={Ann. of Math. (2)},
   volume={151},
   date={2000},
   number={1},
   pages={93--124},
   issn={0003-486X},
   review={\MR{1745017 (2001a:57049)}},
   doi={10.2307/121113},
}
\bib{MR1635698}{article}{
   author={Ruan, Yongbin},
   title={Virtual neighborhoods and the monopole equations},
   conference={
      title={Topics in symplectic $4$-manifolds},
      address={Irvine, CA},
      date={1996},
   },
   book={
      series={First Int. Press Lect. Ser., I},
      publisher={Int. Press, Cambridge, MA},
   },
   date={1998},
   pages={101--116},
   review={\MR{1635698 (2000e:57054)}},
}
\bib{MR1923274}{article}{
   author={Sung, Chanyoung},
   title={Extremal almost-K\"ahler metrics and Seiberg-Witten theory},
   journal={Ann. Global Anal. Geom.},
   volume={22},
   date={2002},
   number={2},
   pages={155--166},
   issn={0232-704X},
   review={\MR{1923274 (2003i:53070)}},
   doi={10.1023/A:1019574914778},
}
\bib{MR1798809}{collection}{
   author={Taubes, Clifford Henry},
   title={Seiberg Witten and Gromov invariants for symplectic $4$-manifolds},
   series={First International Press Lecture Series},
   volume={2},
   note={Edited by Richard Wentworth},
   publisher={International Press},
   place={Somerville, MA},
   date={2000},
   pages={vi+401},
   isbn={1-57146-061-6},
   review={\MR{1798809 (2002j:53115)}},
}
\bib{MR1306023}{article}{
   author={Taubes, Clifford Henry},
   title={The Seiberg-Witten invariants and symplectic forms},
   journal={Math. Res. Lett.},
   volume={1},
   date={1994},
   number={6},
   pages={809--822},
   issn={1073-2780},
   review={\MR{1306023 (95j:57039)}},
   doi={10.4310/MRL.1994.v1.n6.a15},
}
\bib{MR1306021}{article}{
   author={Witten, Edward},
   title={Monopoles and four-manifolds},
   journal={Math. Res. Lett.},
   volume={1},
   date={1994},
   number={6},
   pages={769--796},
   issn={1073-2780},
   review={\MR{1306021 (96d:57035)}},
   doi={10.4310/MRL.1994.v1.n6.a13},
}
\end{biblist}
\end{bibdiv}
\end{document}